\theoremstyle{plain}
\newtheorem{lem}[subsection]{Lemma}
\newtheorem{prop}[subsection]{Proposition}
\newtheorem{thm}[subsection]{Theorem}
\newtheorem{assumption}[subsection]{Assumption}
\theoremstyle{definition}
\newtheorem{defn}[subsection]{Definition}
\theoremstyle{remark}
\newtheorem{rem}[subsection]{Remark}
\newcommand{\ZZ}{{ \mathbb{Z} }}
\newcommand{\Mod}{{ \mathsf{Mod} }}
\newcommand{\ModR}{{ \mathsf{Mod}_\capR }}
\newcommand{\Alg}{{ \mathsf{Alg} }}
\newcommand{\TQ}{{ \mathsf{TQ} }}
\newcommand{\AlgJ}{{ \Alg_J }}
\newcommand{\AlgO}{{ \Alg_\capO }}
\newcommand{\capO}{{ \mathcal{O} }}
\newcommand{\capR}{{ \mathcal{R} }}
\newcommand{\capX}{{ \mathcal{X} }}
\newcommand{\id}{{ \mathrm{id} }}
\newcommand{\wequiv}{{ \ \simeq \ }}
\newcommand{\iso}{{ \cong }}
\newcommand{\function}[3]{{ {#1}\colon\thinspace{#2}\rightarrow{#3} }}
\DeclareMathOperator*{\hocolim}{hocolim}
\DeclareMathOperator*{\holim}{holim}
\DeclareMathOperator{\Tot}{Tot}
\DeclareMathOperator{\hofib}{hofib}
\newcommand{\abs}[1]{\left\vert#1\right\vert}
\newcommand{\sett}[1]{\left\{#1\right\}}
\title[Fibration theorems for $\TQ$-completion]{Fibration theorems for $\TQ$-completion of structured ring spectra}
\author{Nikolas Schonsheck}
\address{Department of Mathematics, The Ohio State University, 231 West 18th Ave, Columbus, OH 43210, USA}
\email{schonsheck.2@osu.edu}
\begin{document}

\maketitle

\begin{abstract}
The aim of this short paper is to establish a spectral algebra analog of the Bousfield-Kan ``fibration lemma'' under appropriate conditions. We work in the context of algebraic structures that can be described as algebras over an operad $\capO$ in symmetric spectra. Our main result is that completion with respect to topological Quillen homology (or $\TQ$-completion, for short) preserves homotopy fibration sequences provided that the base and total $\capO$-algebras are connected. Our argument essentially boils down to proving that the natural map from the homotopy fiber to its $\TQ$-completion tower is a pro-$\pi_*$ isomorphism. More generally, we also show that similar results remain true if we replace ``homotopy fibration sequence'' with ``homotopy pullback square.''
\end{abstract}

\section{Introduction}
This paper is written in the context of symmetric spectra \cite{Hovey_Shipley_Smith, Schwede_book_project}, and more generally, modules over a commutative ring spectrum $\capR$; see \cite{EKMM} for another approach to a well-behaved category of spectra. We consider any algebraic structure in the closed symmetric monoidal category $(\ModR,\wedge,\capR)$ of $\capR$-modules that can be described as algebras over a reduced operad $\capO$; that is, $\capO[0]=*$ is the trivial $\capR$-module and hence $\capO$-algebras are non-unital (see, for instance, \cite{Ching_Harper_derived_Koszul_duality, Harper_Hess}).

Topological Quillen homology, or $\TQ$-homology for short, is the precise $\capO$-algebra analog of ordinary homology for spaces and is weakly equivalent to the stabilization of $\capO$-algebras; see, for instance,  \cite{Basterra, Basterra_Mandell, Harper_Hess, Lawson}. The $\TQ$-completion  of an $\capO$-algebra $X$, denoted $X^\wedge_\TQ$, is supposed to be the ``part of the $\capO$-algebra that $\TQ$-homology sees'' (\cite{Ching_Harper_derived_Koszul_duality, Harper_bar_constructions, Harper_Zhang}). Analogous to Bousfield-Kan's $\mathbb{Z}$-completion \cite{Bousfield_Kan} of a space, $X_\TQ^\wedge$ is the homotopy limit of the cosimplicial resolution built by iterating the unit map of the monad associated to the $\TQ$-homology adjunction. We review these constructions in Section \ref{sec:Background_on_TQ}, but to keep this paper appropriately concise, we freely use the notation in \cite{Ching_Harper_derived_Koszul_duality}.

Suppose we start with a fibration sequence  $F\rightarrow E\rightarrow B$ of $\capO$-algebras and consider the associated commutative diagram of the form
\begin{align}
\label{eq:fibration_sequence_diagram}
\xymatrix{
  F\ar[d]^{(*)}\ar[r] & 
  E\ar[d]\ar[r] & 
  B\ar[d]\\
  F^\wedge_\TQ\ar[r] & 
  E^\wedge_\TQ\ar[r] & 
  B^\wedge_\TQ
}
\end{align}
in $\AlgO$. The aim of this short paper is to establish sufficient conditions on $E\rightarrow B$ such that the bottom row is also a fibration sequence. In other words, we are interested in establishing a $\TQ$-completion analog of the Bousfield-Kan ``fibration lemma'' \cite[II.2.2]{Bousfield_Kan}, under appropriate additional conditions on $E\rightarrow B$. If we are in the special situation where $E,B$ are $\TQ$-complete (i.e., their  coaugmentation maps in  \eqref{eq:fibration_sequence_diagram} are weak equivalences), then this amounts to verifying that $(*)$ is a weak equivalence. The following theorem is our main result.

\begin{thm}[$\TQ$-completion fibration theorem]
\label{MainTheorem1}
Let $E\rightarrow B$ be a fibration of $\capO$-algebras with fiber $F$. If $E,B$ are $0$-connected, then the $\TQ$-completion map $F\wequiv F^\wedge_\TQ$ is a weak equivalence; furthermore, the natural map from $F$ to its $\TQ$-completion tower is a pro-$\pi_*$ isomorphism.
\end{thm}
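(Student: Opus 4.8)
The plan is to prove the ``furthermore'' clause directly, since it implies the first assertion: writing $C(-)$ for the cosimplicial $\capO$-algebra whose homotopy limit is $\TQ$-completion, so that $X^\wedge_\TQ \simeq \holim_\Delta C(X)$ and $\{\Tot_s C(X)\}_s$ is the $\TQ$-completion tower, a pro-$\pi_*$ isomorphism from the constant pro-object $\{F\}$ to $\{\Tot_s C(F)\}_s$ forces $\lim^1 = 0$ and $\pi_* F \iso \pi_* F^\wedge_\TQ$, hence $F \wequiv F^\wedge_\TQ$. Note first that the long exact sequence of homotopy groups shows $F$ is connective but need not be $0$-connected --- indeed $\pi_0 F$ is a quotient of $\pi_1 B$ --- so $F$ by itself does not fall under the connected-algebra $\TQ$-completion theorem, and the fibration structure genuinely must be used. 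The first step is to reduce the base and total object to the understood case: since $E$ and $B$ are $0$-connected, that theorem (reviewed in Section~\ref{sec:Background_on_TQ}; see also \cite{Harper_Zhang, Ching_Harper_derived_Koszul_duality}) gives $E \wequiv E^\wedge_\TQ$, $B \wequiv B^\wedge_\TQ$, and, more precisely, that $\{E\} \to \{\Tot_s C(E)\}_s$ and $\{B\} \to \{\Tot_s C(B)\}_s$ are pro-$\pi_*$ isomorphisms.

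Next I would exploit functoriality of $C(-)$ in the fibration $F \to E \to B$. Taking the degreewise homotopy fiber of $C(E) \to C(B)$ and then $s$-th partial totalizations gives, for each $s$, a homotopy fiber sequence $\Phi_s \to \Tot_s C(E) \to \Tot_s C(B)$ with $\Phi_s := \hofib\bigl(\Tot_s C(E) \to \Tot_s C(B)\bigr)$, using that $\Tot_s$ commutes with homotopy fibers. Since homotopy limits commute, $\holim_s \Phi_s \simeq \hofib(E^\wedge_\TQ \to B^\wedge_\TQ) \simeq \hofib(E \to B) \simeq F$. As the underlying diagram of $\capR$-modules is a homotopy fiber sequence, the long exact sequences of homotopy groups are natural and exact, hence assemble into a long exact sequence of pro-$\capR$-modules; comparing with the long exact sequence of $F \to E \to B$ and using the pro-$\pi_*$ isomorphisms for $E$ and $B$, the five lemma in the abelian category of pro-abelian groups shows $\{F\} \to \{\Phi_s\}_s$ is a pro-$\pi_*$ isomorphism. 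On the other hand, the natural transformation $C^n(F) \to \hofib\bigl(C^n(E) \to C^n(B)\bigr)$, $n\geq 0$, of cosimplicial $\capO$-algebras, after applying $\Tot_s$, gives a map of towers $\{\Tot_s C(F)\}_s \to \{\Phi_s\}_s$ whose precomposition with the coaugmentation maps $F \to \Tot_s C(F)$ recovers the pro-$\pi_*$ isomorphism $\{F\} \to \{\Phi_s\}_s$ produced above. By two-out-of-three for pro-$\pi_*$ isomorphisms, it therefore suffices to show that this comparison $\{\Tot_s C(F)\}_s \to \{\Phi_s\}_s$ is itself a pro-$\pi_*$ isomorphism; the ``furthermore'' clause then follows.

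I expect this last reduction to be the main obstacle. The degreewise map $C^n(F) \to \hofib\bigl(C^n(E) \to C^n(B)\bigr)$ is \emph{not} a weak equivalence --- $\TQ$-homology does not send homotopy fiber sequences to homotopy fiber sequences --- so no levelwise argument is available; one must instead estimate the connectivity of this defect, which is built by iterating the $\TQ$-homology monad, and show that after applying $\Tot_s$ it becomes highly connected as $s \to \infty$, so that the map of towers is a pro-$\pi_*$ isomorphism although it fails to be a levelwise one. The relevant tools are the $\TQ$-homology Hurewicz and Blakers--Massey--type estimates (in the form developed by Ching--Harper and Harper--Zhang), applied to the $0$-connected map $E \to B$ and propagated through the cobar construction; the $0$-connectedness of $E$ and $B$ is precisely what makes those estimates strong enough to survive the $\Tot_s$-filtration, and in particular to absorb the possibly nonzero $\pi_0 F$. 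Keeping careful track of these connectivity estimates through the cosimplicial resolution, and of their interplay with the homotopy spectral sequence of the tower, is where the real work lies; the same strategy, applied to the relevant square of $\TQ$-completion towers, should also yield the stated generalization to homotopy pullback squares.
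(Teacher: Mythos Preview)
Your outline is sound up to the point you yourself flag as the main obstacle, and your reduction is slightly different from (and arguably cleaner than) the paper's. The paper obtains the pro-$\pi_*$ isomorphism $\{F\}\to\{\Phi_s\}_s=\{\mathsf{Tot}_s\tilde{F}\}_s$ by first proving uniform cartesian estimates on the coface cubes of $F\to\tilde{F}$ (Proposition~\ref{prop:uniform_cartesian_estimates_for_F}), whereas you get it for free from the five lemma in pro-abelian groups, using only that $\{E\}\to\{\mathsf{Tot}_s C(E)\}_s$ and $\{B\}\to\{\mathsf{Tot}_s C(B)\}_s$ are pro-$\pi_*$ isomorphisms. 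That is a genuine shortcut for this particular step.

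The gap is the last paragraph. You correctly identify that the comparison $\{\mathsf{Tot}_s C(F)\}_s\to\{\Phi_s\}_s$ is the crux and that the levelwise maps $C^n(F)\to\tilde{F}^n$ are not equivalences, but ``estimate the connectivity of this defect and show that after $\mathsf{Tot}_s$ it becomes highly connected'' is not a proof, and it is not clear that a direct connectivity estimate of this kind is available. The paper does \emph{not} attack this comparison directly. Instead it introduces a bicosimplicial object $(UQ)^{\bullet+1}\tilde{F}$, resolving each $\tilde{F}^n$ by $\TQ$, and compares both towers to $\{\mathsf{Tot}_s\mathsf{Tot}_s(UQ)^{\bullet+1}\tilde{F}\}_s$. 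Two separate inputs make this work: in the ``horizontal'' direction, one shows that $UQ$ preserves $\id$-cartesian cubes (Proposition~\ref{prop:UQ_preserves_id_cartesian}, via higher dual Blakers--Massey), so that $(UQ)^kF\to(UQ)^k\tilde{F}$ inherits the cartesian estimates for every $k$; in the ``vertical'' direction, one observes that each $\tilde{F}^n$ is weakly equivalent to an object in the image of $U$, whence $\tilde{F}^n\to(UQ)^{\bullet+1}\tilde{F}^n$ has an extra codegeneracy and the coaugmentation tower is a pro-$\pi_*$ isomorphism (Proposition~\ref{prop:vertical_direction}). These are then assembled by the tower lemma (Proposition~\ref{prop:tower_lemma}). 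The extra-codegeneracy observation is the piece your sketch is missing; the Blakers--Massey/Hurewicz estimates you mention are indeed used, but only after this bicosimplicial maneuver puts them in a position to be effective.
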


This idea generalizes. Suppose we instead start with a fibration $\function{p}{X}{Y}$ that fits into a left-hand pullback square of the form
\begin{align}
\xymatrix{
A \ar[r] \ar[d] & X \ar[d]^{p}\\
B \ar[r] & Y	
}\quad\quad\quad
\xymatrix{
A^\wedge_\TQ \ar[r] \ar[d] & X^\wedge_\TQ \ar[d]\\
B^\wedge_\TQ \ar[r] & Y^\wedge_\TQ
}
\end{align}
in $\AlgO$. We would like to establish sufficient conditions on the pullback data $B\rightarrow Y\leftarrow X$ such that the right-hand square of the indicated form is also a homotopy pullback diagram. Similar to above, if $B,X,Y$ are $\TQ$-complete, then this amounts to verifying that the $\TQ$-completion map $A\wequiv A^\wedge_\TQ$ is a weak equivalence. The following theorem is a generalization of our main result.

\begin{thm}[$\TQ$-completion homotopy pullback theorem]
\label{MainTheorem2}
Consider any pullback square of the form
\begin{align}
\label{eq:pullback_diagram_generic}
\xymatrix{
A \ar[r] \ar[d] & X \ar[d]^{p}\\
B \ar[r] & Y	
}
\end{align}
in $\AlgO$, where $p$ is a fibration. If $B,X,Y$ are $0$-connected, then the $\TQ$-completion map $A \simeq A_\TQ^\wedge$ is a weak equivalence; furthermore, the natural map from $A$ to its $\TQ$-completion tower is a pro-$\pi_*$ isomorphism.
\end{thm}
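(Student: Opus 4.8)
The plan is to reduce both conclusions to the pro-$\pi_*$ statement, to single out the homotopy fiber as the one object that really needs work, and to bootstrap from the known behavior of $\TQ$-completion on $0$-connected $\capO$-algebras.

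\emph{Reduction.} Since $p$ is a fibration, \eqref{eq:pullback_diagram_generic} is a homotopy pullback and $A\to B$ is a fibration whose fiber is $F:=\fiber(p)$; hence there are two fibration sequences
\[
F\longrightarrow X\longrightarrow Y,\qquad F\longrightarrow A\longrightarrow B
\]
with common fiber $F$. Reading off the long exact sequences of homotopy groups and using that $B,X,Y$ are $0$-connected shows $F$ and $A$ are $(-1)$-connected, but in general neither is $0$-connected --- which is precisely why the results of \cite{Ching_Harper_derived_Koszul_duality,Harper_Zhang} on $\TQ$-completion of $0$-connected $\capO$-algebras cannot be invoked directly. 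Writing $\{\Tot_s C(-)\}_s$ for the $\TQ$-completion tower, so that $(-)^\wedge_\TQ\simeq\holim_s\Tot_s C(-)$, it suffices to prove that the coaugmentation $\{A\}_{\mathrm{const}}\to\{\Tot_s C(A)\}_s$ is a pro-$\pi_*$ isomorphism: a tower pro-isomorphic to a constant tower is Mittag-Leffler, so the $\lim^1$ term in the Milnor sequence for $\holim_s\Tot_s C(A)$ vanishes and $\lim_s$ recovers $\pi_*A$, which yields the weak equivalence $A\simeq A^\wedge_\TQ$.

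\emph{A $\TQ$-completion fibration lemma.} The key step is to prove: \emph{if $F'\to E'\to B'$ is a fibration sequence of $\capO$-algebras with $B'$ $0$-connected, then $\{F'\}\to\{\Tot_s C(F')\}$ is a pro-$\pi_*$ isomorphism if and only if $\{E'\}\to\{\Tot_s C(E')\}$ is.} Granting this, the theorem follows by two applications. First, applied to $F\to X\to Y$: here both $X$ and $Y$ are $0$-connected, so their tower coaugmentations are pro-$\pi_*$ isomorphisms by \cite{Ching_Harper_derived_Koszul_duality,Harper_Zhang}, and we conclude that $\{F\}\to\{\Tot_s C(F)\}$ is a pro-$\pi_*$ isomorphism --- this is already Theorem~\ref{MainTheorem1}. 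Second, applied to $F\to A\to B$: here $B$ is $0$-connected and $\{F\}$ is now known to be pro-$\pi_*$ isomorphic to its tower, so we conclude that $\{A\}\to\{\Tot_s C(A)\}$ is a pro-$\pi_*$ isomorphism, which by the Reduction completes the proof. Once one has the ``pro-fibration-sequence'' input described below, the equivalence in the fibration lemma is just the five lemma in the abelian category of pro-abelian groups, applied to the long exact sequence of pro-homotopy groups associated to $F'\to E'\to B'$ and to that of the completion towers, together with the fact that $B'$ being $0$-connected makes $\{B'\}\to\{\Tot_s C(B')\}$ a pro-$\pi_*$ isomorphism.

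\emph{Main obstacle.} The heart of the matter is that input: that applying the cosimplicial resolution $C(-)$ levelwise to $F'\to E'\to B'$ and passing to the towers $\{\Tot_s C(-)\}_s$ produces a tower of fibration sequences \emph{up to pro-$\pi_*$ isomorphism}. This is nontrivial because derived $\TQ$-homology does \emph{not} carry a fibration sequence of $\capO$-algebras to a fibration sequence --- its defect is governed, via the Blakers--Massey theorem for $\capO$-algebras of \cite{Harper_Hess}, only over a finite range, so no levelwise argument is available. I would run this along the lines of Bousfield--Kan's argument \cite[II.5.1]{Bousfield_Kan}: arrange a model in which $C(E')^\bullet\to C(B')^\bullet$ is a fibration of cosimplicial $\capO$-algebras, compare the Bousfield--Kan homotopy spectral sequence of its fiber cosimplicial $\capO$-algebra with that of $C(F')^\bullet$, and use the $0$-connectedness of $B'$ --- in tandem with the $\TQ$-homology Hurewicz theorem for connected $\capO$-algebras --- as the substitute for the nilpotent-action hypothesis of \cite{Bousfield_Kan}, forcing the vanishing and strong convergence that make the two spectral sequences pro-isomorphic. (The bare weak equivalence $F\simeq F^\wedge_\TQ$ in fact also follows from a soft retract argument: naturality of the coaugmentation together with $\fiber(X^\wedge_\TQ\to Y^\wedge_\TQ)\simeq F$ and the $\TQ$-completeness of $X,Y$ exhibits $F$ as a homotopy retract of the $\TQ$-complete $\capO$-algebra $F^\wedge_\TQ$; but this shortcut delivers neither the pro-$\pi_*$ refinement nor, on its own, the case of the total algebra $A$.)
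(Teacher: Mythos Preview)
Your route differs from the paper's. Rather than factor through the common fiber $F$ and a general ``fibration lemma,'' the paper attacks $A$ directly: it forms the coaugmented cosimplicial object $A\to\tilde A$ by taking levelwise homotopy pullbacks of the $\TQ$-resolutions of $B,X,Y$, and then proves (Proposition~\ref{prop:uniform_cartesian_esimates_for_A}) that the associated coface $(n{+}1)$-cubes are $\id$-cartesian. That estimate uses the $0$-connectedness of all three of $B,X,Y$ at once: one assembles the four coface $(n{+}1)$-cubes into an $(n{+}3)$-cube and peels off layers with repeated use of \cite[3.8]{Ching_Harper} together with the higher $\TQ$-Hurewicz theorem. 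Once the $\id$-cartesian estimate is in hand, the remainder (Propositions~\ref{prop:UQ_preserves_id_cartesian}, \ref{prop:horizontal_direction}, \ref{prop:vertical_direction} and the tower lemma~\ref{prop:tower_lemma}) runs verbatim from the fibration case.

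Your ``main obstacle'' is a genuine gap, and it bites precisely at the second application. The fibration lemma you need must apply to $F\to A\to B$ with only $B$ assumed $0$-connected, and the five-lemma step requires the $\TQ$-completion towers of $F,A,B$ to form a pro-fibration sequence---equivalently, that $\{\mathsf{Tot}_s(UQ)^{\bullet+1}F\}_s$ be pro-$\pi_*$ isomorphic to $\{\mathsf{Tot}_s\tilde F_A\}_s$, where $\tilde F_A$ is the levelwise homotopy fiber of $(UQ)^{\bullet+1}A\to(UQ)^{\bullet+1}B$. Every comparison of this type in the paper (Proposition~\ref{prop:uniform_cartesian_estimates_for_F} and its proof) feeds the higher $\TQ$-Hurewicz theorem (Proposition~\ref{Higher_TQ_Hurewicz_Theorem}) the coface cubes of \emph{both} the total and the base, and that theorem demands $(\id+1)$-cartesian input---in particular $0$-connected $0$-cubes. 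Since $A$ is only $(-1)$-connected, you obtain no cartesian estimate on the coface cubes of $A\to(UQ)^{\bullet+1}A$, and the argument stalls. Your gesture toward \cite[II.5]{Bousfield_Kan} is reasonable in spirit, but porting the mod-$\mathcal{C}$ fibre square and principal-refinement machinery to $\capO$-algebras would be substantial new work, not a citation; the paper's direct $(n{+}3)$-cube estimate is exactly the device that replaces it.

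One correction to your parenthetical: the retract shortcut \emph{does} handle $A$, not only $F$. Naturality of the coaugmentation produces a map from $A^\wedge_\TQ$ to the homotopy pullback of $B^\wedge_\TQ\to Y^\wedge_\TQ\leftarrow X^\wedge_\TQ$, and since $B,X,Y$ are $0$-connected the latter is weakly equivalent to $A$; the composite with $A\to A^\wedge_\TQ$ is the identity. A retract of a $\TQ$-complete object is $\TQ$-complete by a direct diagram chase on $\pi_*$ using naturality of the coaugmentation (no idempotence needed), so this already yields $A\simeq A^\wedge_\TQ$. What it does not yield, as you correctly note, is the pro-$\pi_*$ refinement.
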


\begin{rem}
It is probably worth pointing out that our strategy of attack works with $\capO$-algebras replaced by pointed simplicial sets. In more detail: Consider any pullback diagram of the form \eqref{eq:pullback_diagram_generic} in pointed simplicial sets, where $p$ is a fibration, and assume that $A$ is connected. If $B,X,Y$ are 1-connected, then the Bousfield-Kan $\ZZ$-completion map $A \simeq A_\ZZ^\wedge$ is a weak equivalence; furthermore, the natural map from $A$ to its $\ZZ$-completion tower is a pro-$\pi_*$ isomorphism. This provides a new proof of the result in Bousfield-Kan (see, for instance \cite[III.5.3]{Bousfield_Kan}) that such $A$ are $\ZZ$-complete.
\end{rem}

For technical reasons explained in Remark \ref{rem:reason_for_assumption}, we make the following assumption; for instance, it allows for an iterable point-set model of $\TQ$-homology and hence an associated point-set model for the $\TQ$-resolution of a cofibrant $\capO$-algebra. Note that to say an operad $\capO$ is $n$-connected means that, for each $r \geq 0$, its constituent $\capO[r]$ is $n$-connected.

\begin{assumption}\label{assumption}
	Throughout this paper, $\capO$ denotes a reduced operad in the closed symmetric monoidal category $(\ModR,\wedge,\capR)$ of $\capR$-modules (see, for instance, \cite{Hovey_Shipley_Smith, Schwede_book_project, Shipley_commutative_ring_spectra}). We assume that $\capO,\capR$ are $(-1)$-connected, and that $\capO$ satisfies the following cofibrancy condition: Consider the unit map $I\rightarrow\capO$; we assume that $I[r]\rightarrow\capO[r]$ is a flat stable cofibration (\cite[7.7]{Harper_Hess}) between flat stable cofibrant objects in $\ModR$ for each $r \geq 0$. This is exactly the cofibrancy condition appearing in \cite[2.1]{Ching_Harper_derived_Koszul_duality}. Unless stated otherwise, we work in the positive flat stable model structure \cite{Harper_Hess} on $\AlgO$.
\end{assumption}

\noindent
{\bf Relationship to previous work.} Ching-Harper prove in \cite{Ching_Harper_derived_Koszul_duality} that all $0$-connected $\capO$-algebras are $\TQ$-complete. However, it was known that this class does not represent all $\TQ$-complete $\capO$-algebras; for instance, one can show that any $\capO$-algebra in the image of $U$ (see Section \ref{sec:Background_on_TQ}) is $\TQ$-complete, by an extra codegeneracy argument. We conjecture that any $\capO$-algebra with a principally refined Postnikov tower is $\TQ$-complete, which would mirror the analogous result for $\mathbb{Z}$-completion of spaces. This paper is a first step in that direction.\\

\noindent
{\bf Acknowledgments.} The author wishes to thank John E. Harper for his support and advice, Yu Zhang for many helpful conversations, and Jake Blomquist, Duncan Clark, and Sarah Klanderman for useful discussions. The author would also like to thank an anonymous referee for helpful comments on improving the exposition and clarity of the paper. The author was supported in part by National Science Foundation grant DMS-1547357 and the Simons Foundation: Collaboration Grants for Mathematicians \#638247.

\section{Outline of the main argument}

We will now outline the proof of Theorem \ref{MainTheorem1}. It suffices to consider the case of a fibration $E\rightarrow B$ in $\AlgO$ between cofibrant objects (otherwise, cofibrantly replace). The first step is (i) to build the associated cosimplicial resolutions of $E,B$ with respect to $\TQ$-homology by iterating the $\TQ$-Hurewicz map $\id\rightarrow UQ$ (see Section \ref{sec:Background_on_TQ}) and (ii) to construct the coaugmented cosimplicial diagram $F\rightarrow \tilde{F}$ that is built by taking (functorial) homotopy fibers vertically, followed by objectwise (functorial) cofibrant replacements. In this way, we obtain a commutative diagram of the form 
\begin{align}
\label{eq:levelwise_fiber_sequences}
\xymatrix{
F \ar[d] \ar[r] & 
\tilde{F}^0 \ar[d] \ar@<0.5ex>[r] \ar@<-0.5ex>[r] & 
\tilde{F}^1 \ar[d] \ar@<1ex>[r] \ar[r] \ar@<-1ex>[r] & 
\tilde{F}^2 \cdots \ar@<-2ex>[d]\\
E \ar[d] \ar[r] & 
(UQ)E \ar[d] \ar@<0.5ex>[r] \ar@<-0.5ex>[r] & 
(UQ)^2 E \ar[d] \ar@<1ex>[r] \ar[r] \ar@<-1ex>[r] & 
(UQ)^3E \cdots \ar@<-2ex>[d]\\	
B \ar[r] & 
(UQ)B \ar@<0.5ex>[r] \ar@<-0.5ex>[r] & 
(UQ)^2B \ar@<1ex>[r] \ar[r] \ar@<-1ex>[r]& 
(UQ)^3B \cdots
}
\end{align}
in $\AlgO$, where the vertical columns are homotopy fibration sequences. Replacing if needed, we may also assume that $\tilde{F}$ is a Reedy fibrant cosimplicial $\capO$-algebra.

\begin{rem}
For ease of notational purposes, we usually suppress the codegeneracy maps in $\Delta$-shaped diagrams appearing throughout this paper.
\end{rem}

 Applying $\holim_\Delta$ (see \cite[Section 8]{Ching_Harper_derived_Koszul_duality}) to the maps of $\Delta$-shaped diagrams in \eqref{eq:levelwise_fiber_sequences}, where we regard the left-hand vertical column as maps of constant $\Delta$-shaped diagrams, gives a commutative diagram in $\AlgO$ of the form
\begin{align*}
\xymatrix{
  F\ar[r]\ar[d] & E\ar[r]\ar[d]^-{\simeq} & B\ar[d]^-{\simeq}\\
  \holim_\Delta\tilde{F}\ar[r] &
  E^\wedge_\TQ\ar[r] &
  B^\wedge_\TQ
}
\end{align*}
where each row is a homotopy fibration sequence. The indicated maps are weak equivalences by \cite[1.6]{Ching_Harper_derived_Koszul_duality} since $E,B$ are assumed to be 0-connected. It follows that the left-hand map $F \to \holim_\Delta\tilde{F}$ is a weak equivalence as well.

The next step is to get the $\TQ$-completion of $F$ into the picture; the basic idea is to prove that $F \to \holim_\Delta\tilde{F}$ is weakly equivalent to the natural coaugmentation $F\rightarrow F^\wedge_\TQ$.  Our strategy of attack is to objectwise resolve, with respect to $\TQ$-homology, the upper horizontal diagram $F\rightarrow\tilde{F}$ 

\begin{align}
\label{eq:resolution_of_F_diagram}
\xymatrix{
\overset{\raisebox{0.5ex}{\vdots}}{(UQ)^3 F} \ar[r]^-{(\#)} & \overset{\raisebox{0.5ex}{\vdots}}{(UQ)^3\tilde{F}^0} \ar@<0.5ex>[r] \ar@<-0.5ex>[r] & \overset{\raisebox{0.5ex}{\vdots}}{(UQ)^3\tilde{F}^1} \ar@<1ex>[r] \ar[r] \ar@<-1ex>[r] & \overset{\raisebox{0.5ex}{\vdots}}{(UQ)^3\tilde{F}^2} \cdots\\
(UQ)^2 F \ar@<1ex>[u] \ar[u] \ar@<-1ex>[u] \ar[r]^-{(\#)} & (UQ)^2\tilde{F}^0 \ar@<1ex>[u] \ar[u] \ar@<-1ex>[u] \ar@<0.5ex>[r] \ar@<-0.5ex>[r] & (UQ)^2\tilde{F}^1 \ar@<1ex>[u] \ar[u] \ar@<-1ex>[u] \ar@<1ex>[r] \ar[r] \ar@<-1ex>[r] & (UQ)^2\tilde{F}^2 \ar@<2.5ex>[u] \ar@<1.5ex>[u] \ar@<0.5ex>[u] \cdots\\
(UQ)F \ar@<0.5ex>[u] \ar@<-0.5ex>[u] \ar[r]^-{(\#)} & 
(UQ)\tilde{F}^0 \ar@<0.5ex>[u] \ar@<-0.5ex>[u] \ar@<0.5ex>[r] \ar@<-0.5ex>[r] & 
(UQ)\tilde{F}^1\ar@<0.5ex>[u] \ar@<-0.5ex>[u] \ar@<1ex>[r] \ar[r] \ar@<-1ex>[r] & 
(UQ)\tilde{F}^2\ar@<2ex>[u] \ar@<1ex>[u] \cdots\\
F \ar[u]^-{(*)'} \ar[r]^-{(\#)} & \tilde{F}^0 \ar[u]_-{(**)} \ar@<0.5ex>[r] \ar@<-0.5ex>[r] & 
\tilde{F}^1 \ar[u]_-{(**)} \ar@<1ex>[r] \ar[r] \ar@<-1ex>[r]& \tilde{F}^2 \ar@<1.5ex>[u]_-{(**)} \cdots
}
\end{align}
in \eqref{eq:levelwise_fiber_sequences}, and show that the maps $(\#)$ and $(**)$ induce weak equivalences after applying $\holim_\Delta$ (Propositions \ref{prop:horizontal_direction} and \ref{prop:vertical_direction}). Once this has been accomplished, we obtain a commutative diagram of the form
\begin{align}
\xymatrix{
\holim_\Delta(UQ)^{\bullet+1}F \ar[r]^-\simeq & \holim_{\Delta\times\Delta}(UQ)^{\bullet+1}\tilde{F}\\
F \ar[u]^-{(*)'} \ar[r]^\simeq & \holim_\Delta\tilde{F} \ar[u]^-\simeq
}
\end{align}
and conclude that the natural coagumentation map $F \simeq F_\TQ^\wedge \simeq \holim_\Delta(UQ)^{\bullet+1}F$ is a weak equivalence. This proves the first part of Theorem \ref{MainTheorem1}.

The second part of Theorem \ref{MainTheorem1} requires additional work. In order to precisely formulate this stronger result, we introduce the following two definitions.

\begin{defn}
	A map of towers of $\capO$-algebras $\sett{X_s}_s \to \sett{Y_s}_s$ is a \emph{pro-$\pi_\ast$ isomorphism} if the induced map
	\[
	\sett{\pi_nX_s}_s \to \sett{\pi_nY_s}_s
	\]
	of (abelian) groups towers is a pro-isomorphism for each $n \in \mathbb{Z}$. (Throughout this paper, we assume all homotopy groups are derived \cite{Schwede_homotopy_groups, Schwede_book_project}.)
\end{defn}

\begin{rem}\label{rem:pro_pi_iso_implies_weak_equivalence}
	Given a pro-$\pi_\ast$ isomorphism as above, it follows from the associated $\lim^1$ short exact sequence that the induced map $\holim_sX_s \simeq \holim_sY_s$ is a weak equivalence; see, for instance, \cite[Section 8]{Ching_Harper_derived_Koszul_duality}.
\end{rem}

\begin{defn}
	Define $\mathsf{Tot}$ as the right derived functor of $\Tot$ in $(\AlgO)^\Delta$ equipped with the Reedy model structure. In other words, given a cosimplicial $\capO$-algebra $X$, we define $\mathsf{Tot}(X)$ to be $\Tot(RX)$ where $RX$ is the (functorial) Reedy fibrant replacement of $X$ in $\AlgO^\Delta$. 
\end{defn}

Stated precisely, the second part of Theorem \ref{MainTheorem1} asserts that the map of towers
\begin{align*}
\sett{F}_s \overset{(*)'}{\longrightarrow} \sett{\mathsf{Tot}_s(UQ)^{\bullet+1}F}_s
\end{align*}
is a pro-$\pi_\ast$ isomorphism. To show that this assertion is true, note that the proofs of Propositions \ref{prop:horizontal_direction} and \ref{prop:vertical_direction} imply that the tower maps 
\[
\sett{(UQ)^kF}_s \to \sett{\mathsf{Tot}_s(UQ)^k\tilde{F}}_s \text { and } \sett{\tilde{F}^n}_s \to \sett{\mathsf{Tot}_s(UQ)^{\bullet+1}\tilde{F}^n}_s
\]
are actually pro-$\pi_\ast$ isomorphisms for each $k, n \geq 0$. Now consider the commutative diagram of towers of the form
\begin{align}
\xymatrix{
\sett{\mathsf{Tot}_s(UQ)^{\bullet+1}}_s \ar[r] & \sett{\mathsf{Tot}_s\mathsf{Tot}_s(UQ)^{\bullet+1}\tilde{F}}_s\\
\sett{F}_s \ar[u]^-{(*)'} \ar[r] & \sett{\mathsf{Tot}_s\tilde{F}}_s \ar[u]
}
\end{align}
It follows from the tower lemma below that the horizontal and right-hand vertical maps are pro-$\pi_\ast$ isomorphisms, and hence the map $(*)'$ is as well.
\begin{prop}[Tower lemma for $\capO$-algebras]\label{prop:tower_lemma}
	Suppose we are given a map from the Reedy fibrant cosimplicial $\capO$-algebra $X$ to a tower of Reedy fibrant cosimplicial $\capO$-algebras $\sett{Y_s}_s$
	\begin{align*}
	\xymatrix{
	X^0 \ar@<0.5ex>[r] \ar@<-0.5ex>[r] \ar[d] & X^1 \ar@<1ex>[r] \ar[r] \ar@<-1ex>[r] \ar[d] & X^2 \ar@<-2ex>[d] \cdots\\	
	\sett{Y^0_s}_s \ar@<0.5ex>[r] \ar@<-0.5ex>[r] & \sett{Y^1_s}_s \ar@<1ex>[r] \ar[r] \ar@<-1ex>[r] & \sett{Y^2_s}_s \cdots
	}
	\end{align*}
	If $\sett{X^k}_s \to \sett{Y^k_s}_s$ induces a pro-$\pi_\ast$ isomoprhism for each fixed $k$, then 
	\[
	\sett{\Tot_nX}_s \to \sett{\Tot_nY_s}_s
	\]
	induces a pro-$\pi_\ast$ isomorhpism for each fixed $n$.
\end{prop}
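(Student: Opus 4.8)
The plan is to prove the statement by induction on $n$, reducing it to formal closure properties of pro-$\pi_\ast$ isomorphisms of towers of $\capO$-algebras. The basic input is classical: the five lemma holds for pro-isomorphisms of towers of abelian groups --- for instance because pro-abelian groups form an abelian category --- so it applies to any map of (long) exact sequences of towers of abelian groups. First I would record the two consequences we need. (a) Since fibrations and weak equivalences in $\AlgO$ are detected in the stable category $\ModR$, a levelwise homotopy fiber sequence of towers of $\capO$-algebras yields a long exact sequence of the towers of (derived) homotopy groups; applying the pro-five lemma to a map of two such sequences shows that pro-$\pi_\ast$ isomorphisms satisfy two-out-of-three along levelwise homotopy fiber sequences. (b) The loop functor $\Omega$ preserves pro-$\pi_\ast$ isomorphisms since it shifts homotopy groups, and, by the Mayer--Vietoris sequence and the pro-five lemma, a map between homotopy pullback squares of towers in which three of the four corner tower maps are pro-$\pi_\ast$ isomorphisms has its fourth corner map a pro-$\pi_\ast$ isomorphism as well. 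Consequently, any functor assembled from finitely many homotopy pullbacks, homotopy fibers, and loops carries systems of pro-$\pi_\ast$ isomorphisms to pro-$\pi_\ast$ isomorphisms.

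Next I would invoke the standard tower attached to a Reedy fibrant cosimplicial object $Z$: there are natural fibrations $\Tot_n Z \to \Tot_{n-1}Z$, with $\Tot_0 Z$ naturally isomorphic to $Z^0$, whose homotopy fiber is $\Omega^n N^n Z$, where $N^n Z = \hofib(Z^n \to M^n Z)$ and $M^n Z$ is the $n$-th matching object; see \cite[X.6]{Bousfield_Kan} and \cite[Section 8]{Ching_Harper_derived_Koszul_duality} for the cosimplicial $\capO$-algebra context. The only features needed are that $\Tot_0 Z \iso Z^0$ and that, because $Z$ is Reedy fibrant, the strict matching object $M^n Z$ is a homotopy limit built from $Z^0,\dots,Z^{n-1}$ by finitely many honest (hence homotopy) pullbacks; thus $\Omega^n N^n Z$ is built from $Z^0,\dots,Z^n$ by finitely many homotopy pullbacks, homotopy fibers, and loops. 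As both $X$ and every $Y_s$ are Reedy fibrant, these strict constructions compute their derived analogues, which is exactly the generality in which the hypotheses and conclusion are phrased, so no further fibrant replacement is required.

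With this in place the induction is routine. The base case $n=0$ is the hypothesis at $k=0$, since $\Tot_0 X \iso X^0$ and $\Tot_0 Y_s \iso Y^0_s$ naturally in $s$. For the inductive step, suppose $\sett{\Tot_{n-1}X}_s \to \sett{\Tot_{n-1}Y_s}_s$ is a pro-$\pi_\ast$ isomorphism. Applying the functorial $\Tot$-tower levelwise in $s$ to the given map $X \to \sett{Y_s}_s$ produces a map of levelwise homotopy fiber sequences of towers
\[
\xymatrix{
\sett{\Omega^n N^n X}_s \ar[r] \ar[d] & \sett{\Tot_n X}_s \ar[r] \ar[d] & \sett{\Tot_{n-1}X}_s \ar[d]\\
\sett{\Omega^n N^n Y_s}_s \ar[r] & \sett{\Tot_n Y_s}_s \ar[r] & \sett{\Tot_{n-1}Y_s}_s.
}
\]
The right-hand vertical map is a pro-$\pi_\ast$ isomorphism by the inductive hypothesis. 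The left-hand vertical map is a pro-$\pi_\ast$ isomorphism because, by the previous paragraph, $\Omega^n N^n(-)$ is assembled from finitely many homotopy pullbacks, homotopy fibers, and loops of the levels $X^k$ (resp.\ $Y_s^k$) with $k\le n$, each of which maps by a pro-$\pi_\ast$ isomorphism by hypothesis, so the closure properties of the first paragraph apply. Hence two-out-of-three along these fiber sequences forces the middle vertical map $\sett{\Tot_n X}_s \to \sett{\Tot_n Y_s}_s$ to be a pro-$\pi_\ast$ isomorphism, completing the induction.

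I expect the main obstacle to be one of careful bookkeeping: one must genuinely use Reedy fibrancy of $X$ and of each $Y_s$ throughout, so that the strict $\Tot_n$, $M^n$, and $N^n$ agree with their homotopy-invariant versions, that the matching-object construction is recognized as a finite homotopy limit, and that the sequences of homotopy-group towers feeding the pro-five lemma are honestly exact; granting that, the argument is formal. As a byproduct it yields slightly more: $\sett{\Tot_n X}_s \to \sett{\Tot_n Y_s}_s$ is a pro-$\pi_\ast$ isomorphism as soon as each $\sett{X^k}_s \to \sett{Y^k_s}_s$ with $k\le n$ is.
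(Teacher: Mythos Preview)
Your argument is correct and is essentially the argument the paper has in mind: the paper gives no details but simply cites \cite[1.4]{Dwyer_exotic_convergence} and asserts that the same proof carries over to $\capO$-algebras. Your induction on $n$ via the $\Tot$-tower fiber sequences $\Omega^n N^n Z \to \Tot_n Z \to \Tot_{n-1} Z$, together with the pro-five lemma and the observation that $M^n$ (hence $N^n$) is a finite homotopy limit of the lower cosimplicial levels, is exactly Dwyer's argument spelled out in this setting.
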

\begin{proof}
In the context of spaces, this is proven in \cite[1.4]{Dwyer_exotic_convergence} and the same argument remains valid in our setting.
\end{proof}

\section{Background on $\TQ$-homology and $\TQ$-completion}\label{sec:Background_on_TQ}
The purpose of this section is to briefly recall the definition of $\TQ$-homology and its associated completion construction. For a more thorough introduction, useful references include \cite{Ching_Harper_derived_Koszul_duality},  \cite{Harper_bar_constructions}, and \cite{Harper_Hess}.

\begin{defn}
	Given an operad $\capO$, its \emph{1-truncation $\tau_1\capO$} is the operad defined by
	\begin{align*}(\tau_1\capO)[r] := 
	\begin{cases}
		\capO[r],& \text{for } r \leq 1,\\
		\hfill \ast,& \text{otherwise}
	\end{cases} 
	\end{align*}
\end{defn}
The canonical map of operads $\capO \to \tau_1\capO$ induces the following change of operads adjunction, with left adjoint on top.
\begin{align}\label{eq:barq_baru}
\xymatrix{
\AlgO \ar@<0.5ex>^-{\bar{Q}}[r] & \Alg_{\tau_1\capO} \ \iso \ \Mod_{\capO[1]} \ar@<0.5ex>^-{\bar{U}}[l]
}
\end{align}

Here, $\bar{Q}(X):= \tau_1\capO\circ_\capO(X)$ and $\bar{U}$ is the forgetful functor. It is proven in \cite{Harper_symmetric_spectra} and \cite{Harper_Hess} that this is, in fact, a Quillen adjunction.

\begin{defn}
	Let $X$ be an $\capO$-algebra. The \emph{$\TQ$-homology} of $X$ is the $\capO$-algebra $\TQ(X):= \mathsf{R}\bar{U}\left(\mathsf{L}\bar{Q}(X)\right)$, where $\mathsf{L}$ and $\mathsf{R}$ indicate the appropriate derived functors.
\end{defn}

We would then like to form a cosimplicial (or Godement) resolution of the form

\begin{align}\label{eq:tq_resolution}
\xymatrix{
X \ar[r] & (\TQ) X \ar@<0.5ex>[r] \ar@<-0.5ex>[r] & (\TQ)^2X \ar@<1ex>[r] \ar[r] \ar@<-1ex>[r] & (\TQ)^3X \cdots	
}
\end{align}

Although $\TQ(X) \simeq \bar{U}\bar{Q}(X)$ if $X$ is cofibrant, the forgetful functor $\bar{U}$ need not send cofibrant objects in $\Alg_{\tau_1\capO}$ to cofibrant objects in $\AlgO$. Consequently, there is no guarantee that $(\TQ)^nX \simeq (\bar{U}\bar{Q})^nX$ for $n \geq 2$. The canonical cosimplicial resolution associated to \eqref{eq:barq_baru} is therefore unlikely to be of the form $\eqref{eq:tq_resolution}$.

Because of this difficulty, an additional maneuver is required to construct an iterable point-set model for $\TQ(X)$. We follow \cite[3.16]{Harper_Hess} to produce a rigidified version of \eqref{eq:tq_resolution}. First, factor the operad map $\capO \to \tau_1\capO$ as
\begin{align*}
\capO \to J \to \tau_1\capO
\end{align*}
a cofibration followed by a weak equivalence. This induces (Quillen) adjunctions
\begin{align}\label{eq:O_J_tau_adjunctions}
\xymatrix{
	\AlgO \ar^-{Q}@<.5ex>[r] & \AlgJ \ar@<0.5ex>[r] \ar^-{U}@<0.5ex>[l] & \Alg_{\tau_1\capO} \ar@<0.5ex>[l]
}
\end{align}
where $Q(X) := J\circ_\capO(X)$ and $U$ is the forgetful functor.
\begin{rem}
The adjunction on the right is, in fact, a Quillen equivalence (see \cite[7.21]{Harper_Hess}) and we therefore think of $\AlgJ$ as a ``fattened up'' version of $\Alg_{\tau_1\capO}$. Furthermore, it follows that $\TQ(X) \simeq UQ(X)$ if $X$ is a cofibrant $\capO$-algebra.
\end{rem}

The advantage of \eqref{eq:O_J_tau_adjunctions} is that the forgetful functor $U$ sends cofibrant objects in $\AlgJ$ to cofibrant objects in $\AlgO$ (see \cite[5.49]{Harper_Hess}). For a cofibrant $\capO$-algebra $X$, we therefore have weak equivalences of the form $(\TQ)^nX \simeq (UQ)^nX$ for all $n \geq 1$. Hence, the canonical resolution
\begin{align}
\xymatrix{
X \ar[r] & (UQ)^{\bullet+1}X \colon (UQ)X \ar@<0.5ex>[r] \ar@<-0.5ex>[r] & (UQ)^2X \ar@<1ex>[r] \ar[r] \ar@<-1ex>[r] & (UQ)^3X \cdots	
}
\end{align}
is of the desired form \eqref{eq:tq_resolution}.

\begin{defn}
	Let $X$ be an $\capO$-algebra. The \emph{$\TQ$-completion of $X$} is the $\capO$-algebra $X_\TQ^\wedge := \holim_\Delta(UQ)^{\bullet+1}(X^c)$, where $X^c$ denotes the functorial cofibrant replacement of $X$ in $\AlgO$.
\end{defn}

\begin{rem}\label{rem:reason_for_assumption}
	We are now in a better position to explain the reasons for Assumption \ref{assumption}, which are as follows. The connectivity assumption on $\capO$ and $\mathcal{R}$ guarantees the results of \cite{Ching_Harper} used below are applicable, while the cofibrancy condition on $\capO$ ensures \cite[5.49]{Harper_Hess} that the forgetful functor $\AlgJ \to \AlgO$ preserves cofibrant objects.
\end{rem}

\section{Analysis of the horizontal direction}\label{sec:horizontal_direction}
The purpose of this section is to analyze the maps $(\#)$ and, in particular, to prove Proposition \ref{prop:horizontal_direction}. The basic idea is to, first, establish uniform cartesian estimates on the canonical coface cubes (see Section \ref{sec:appendix}) associated to the coaugmented cosimplicial $\capO$-algebra $F \to \tilde{F}$. This is the content of Proposition \ref{prop:uniform_cartesian_estimates_for_F} and is accomplished by analyzing the corresponding coface cubes of $E \to (UQ)^{\bullet+1}E$ and $B \to (UQ)^{\bullet+1}B$. We next show, in Proposition \ref{prop:UQ_preserves_id_cartesian}, that objectwise application of the $\TQ$-homology spectrum functor preserves this cartesian estimate. Proposition \ref{prop:horizontal_direction} then follows inductively. 

Our analysis in this section will involve a number of concepts from cubical homotopy theory. We provide an overview of the relevant details in Section \ref{sec:appendix}.

\begin{defn}\label{def:coface_cubes}
	Let $\mathcal{E}_{n+1}$ be the coface $(n+1)$-cube associated to the coaugmented cosimplicial $\capO$-algebra $E \to (UQ)^{\bullet+1}E$ and define $\mathcal{B}_{n+1}$ similarly. Let $\tilde{\mathcal{F}}_{n+1}$ be the coface $(n+1)$-cube associated to the coaugmented cosimplicial $\capO$-algebra $F \to \tilde{F}$.
\end{defn}

The following proposition gives the uniform cartesian estimates on $\mathcal{E}_{n+1}$ and $\mathcal{B}_{n+1}$ (by setting $k = 0$) that we will ultimately use to analyze $\tilde{\mathcal{F}}_{n+1}$. It is proven in \cite[7.1]{Blomquist_iterated_delooping}; a special case is dealt with also in \cite{Ching_Harper_derived_Koszul_duality}. The proposition is a spectral algebra analogue of Dundas's \cite[2.6]{Dundas_relative_K_theory} higher Hurewicz theorem. 

\begin{prop}[Higher $\TQ$-Hurewicz theorem]\label{Higher_TQ_Hurewicz_Theorem}
	Let $k\geq 0$ and $\capX$ be a $W$-cube in $\capO$-algebras that is objectwise cofibrant. If $\capX$ is $(\id + 1)(k + 1)$-cartesian, then so is $\capX\rightarrow UQ\capX$.
\end{prop}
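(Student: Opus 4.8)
Write $w := |W|$. Since $\capX$ is objectwise cofibrant and the forgetful functor $\AlgJ\to\AlgO$ preserves cofibrant objects (one place Assumption \ref{assumption} is used), $UQ$ computes $\TQ$-homology levelwise and $\capX\to UQ\capX$ is an objectwise cofibrant $(w+1)$-cube; the goal is to show it is $(w+2)(k+1)$-cartesian. The plan is to reduce everything to the ordinary $\TQ$-Hurewicz theorem by passing to total homotopy fibers. Below, all functors are understood in the derived sense, and $\operatorname{tfib}$ denotes the total homotopy fiber of a cube, so that a $d$-cube is $c$-cartesian precisely when its $\operatorname{tfib}$ is $(c-1)$-connected.

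First I would split off the new cube direction to obtain a natural equivalence $\operatorname{tfib}(\capX\to UQ\capX)\wequiv\hofib\bigl(\operatorname{tfib}(\capX)\to\operatorname{tfib}(UQ\capX)\bigr)$, which reduces the claim to showing that $\operatorname{tfib}(\capX)\to\operatorname{tfib}(UQ\capX)$ is $(w+2)(k+1)$-connected. By hypothesis $\operatorname{tfib}(\capX)$ is $n$-connected with $n:=(w+1)(k+1)-1\geq 0$. Next, using that this map is the total fiber of the map of cubes induced by the $\TQ$-Hurewicz transformation $\id\to UQ$, and that $\id\to UQ$ is natural, I would factor it as
\[
\operatorname{tfib}(\capX)\xrightarrow{\ \eta\ }UQ\bigl(\operatorname{tfib}(\capX)\bigr)\xrightarrow{\ \gamma\ }\operatorname{tfib}(UQ\capX),
\]
where $\eta$ is the $\TQ$-Hurewicz map of the $\capO$-algebra $\operatorname{tfib}(\capX)$ and $\gamma$ is the canonical comparison map (it exists because $U$ preserves homotopy limits and there is always a natural map $Q(\holim)\to\holim Q$). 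The ordinary $\TQ$-Hurewicz theorem (\cite{Harper_Hess, Ching_Harper_derived_Koszul_duality}) applied to the $n$-connected algebra $\operatorname{tfib}(\capX)$ makes $\eta$ at least $(2n+1)$-connected, and one checks $2n+1=2(w+1)(k+1)-1\geq(w+2)(k+1)$ whenever $w\geq 1$ (the case $w=0$ being the ordinary $\TQ$-Hurewicz theorem itself). Then, from the fiber sequence $\hofib(\eta)\to\hofib(\gamma\circ\eta)\to\hofib(\gamma)$ of a composite, the whole problem collapses to the estimate that $\gamma$ is $(w+2)(k+1)$-connected --- equivalently, that $\TQ$-homology commutes with the finite homotopy limit computing $\operatorname{tfib}$ up to that range.

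To prove this last estimate --- which I expect to be the main obstacle --- I would model $Q(-)$ by the bar construction $|\BAR(J,\capO,-)|$. Since geometric realization and filtered colimits commute with finite homotopy limits, the claim about $\gamma$ reduces to the corresponding claim for the summand functors $X\mapsto M\Smash_{\Sigma_q}X^{\Smash q}$ that make up the simplicial levels, where $M=(J\circ\capO^{\circ p})[q]$ is $(-1)$-connected. The contributions with $q\leq 1$ are harmless --- $q=0$ is constant and $q=1$ is a smash product with a spectrum, which commutes with total fibers exactly --- so the genuine content is a connectivity gain for the smash-power functor $X\mapsto X^{\Smash q}$ with $q\geq 2$, obtained by iterated Blakers--Massey estimates; this is exactly where the $(-1)$-connectivity hypotheses on $\capO$ and $\capR$ are used. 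Reassembling the pieces gives that $\gamma$ has the required connectivity. The detailed execution of this step is \cite[7.1]{Blomquist_iterated_delooping} (with a special case in \cite{Ching_Harper_derived_Koszul_duality}), to which I would refer for the book-keeping.
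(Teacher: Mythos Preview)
The paper does not actually prove this proposition: it is stated and immediately attributed to \cite[7.1]{Blomquist_iterated_delooping}, with a special case noted in \cite{Ching_Harper_derived_Koszul_duality}. So your proposal is being compared against a bare citation, and your sketch --- pass to total homotopy fibers, factor through the $\TQ$-Hurewicz map of $\operatorname{tfib}(\capX)$, then estimate the comparison map $\gamma$ via the bar construction and smash-power connectivity --- is a reasonable outline that ultimately lands on the very same reference.

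Two remarks. First, the conclusion is that $\capX\to UQ\capX$ is $(\id+1)(k+1)$-cartesian, which is a condition on \emph{every} subcube, not just the full $(w+1)$-cube; you only argue the top-level estimate. This is easily repaired by running your argument on each subcube $\capY\to UQ\capY$ and then using \cite[3.8]{Ching_Harper} to propagate the required estimates to the faces lying entirely inside $UQ\capX$. Second, be aware that what you defer to \cite[7.1]{Blomquist_iterated_delooping} for the $\gamma$-step is, according to the paper, exactly the proposition itself; so as written your sketch is better read as a plausible heuristic for how that reference proceeds than as an independent reduction to something strictly smaller.
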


The uniform cartesian estimates given by Proposition \ref{Higher_TQ_Hurewicz_Theorem} applied to $\mathcal{E}_{n+1}, \mathcal{B}_{n+1}$ imply a (slightly weaker) uniform cartesian estimate on $\tilde{\mathcal{F}}_{n+1}$. 
\begin{prop}
	\label{prop:uniform_cartesian_estimates_for_F}
	Let $n\geq -1$. The coface $(n+1)$-cube $\tilde{\mathcal{F}}_{n+1}$ associated to $F\rightarrow\tilde{F}$ is $\id$-cartesian.
\end{prop}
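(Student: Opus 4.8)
The goal is to transfer the uniform cartesian estimates on the coface cubes $\mathcal{E}_{n+1}$ and $\mathcal{B}_{n+1}$ (coming from Proposition \ref{Higher_TQ_Hurewicz_Theorem} with $k=0$, which tells us these cubes are $(\id+1)$-cartesian) across the vertical homotopy fiber sequences to obtain an estimate on $\tilde{\mathcal{F}}_{n+1}$. The first step is to set up, for each punctured-cube vertex, the vertical homotopy fibration sequence relating $\tilde{\mathcal{F}}_{n+1}$, $\mathcal{E}_{n+1}$, and $\mathcal{B}_{n+1}$ at that vertex; since taking homotopy fibers commutes with total homotopy fibers of cubical diagrams (a standard cubical homotopy theory fact, which I would cite from the material promised in Section \ref{sec:appendix}), the total homotopy fiber of $\tilde{\mathcal{F}}_{n+1}$ sits in a homotopy fiber sequence with the total homotopy fibers of $\mathcal{E}_{n+1}$ and $\mathcal{B}_{n+1}$.

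The second step is the numerics. We know $\mathcal{E}_{n+1}$ and $\mathcal{B}_{n+1}$ are each $(\id+1)$-cartesian, i.e., the total homotopy fiber of the $(n+1)$-cube is $(n+1+1) = (n+2)$-connected. In the homotopy fiber sequence $\total\fiber(\tilde{\mathcal{F}}_{n+1}) \to \total\fiber(\mathcal{E}_{n+1}) \to \total\fiber(\mathcal{B}_{n+1})$, the long exact sequence of homotopy groups shows that if the base is $(n+2)$-connected and the total space is $(n+2)$-connected, then the fiber is $(n+1)$-connected. That is exactly the statement that $\tilde{\mathcal{F}}_{n+1}$ is $(n+1)$-cartesian, i.e., $\id$-cartesian for an $(n+1)$-cube — which is the slightly weaker estimate claimed. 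One should also handle the base cases $n = -1$ (where $\tilde{\mathcal{F}}_0$ is just the map $F \to \tilde{F}^0$, and $0$-cartesian means this map is $0$-connected, following from the corresponding statement for $E \to (UQ)E$ and $B \to (UQ)B$) and confirm objectwise cofibrancy is in place so that the derived notions agree.

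The main obstacle I anticipate is bookkeeping rather than conceptual: one must be careful that the functorial homotopy fibers followed by objectwise cofibrant replacement used to build $\tilde{F}$ genuinely realize the homotopy fiber sequences objectwise on the cube, so that the "homotopy fibers commute with total homotopy fibers" principle applies verbatim; and one must make sure the connectivity indexing conventions (the meaning of "$k$-cartesian" for a $W$-cube as $|W|$ varies, and the off-by-one in passing from the cartesian estimate on a cube to connectivity of its total fiber) are handled consistently with Section \ref{sec:appendix}. Once the fiber sequence of total homotopy fibers is in hand, the argument is a one-line application of the long exact sequence, so essentially all the work is in steps one and the setup.
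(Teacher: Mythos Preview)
Your approach is essentially the paper's: the paper packages your fiber sequence of total homotopy fibers as the statement that the iterated homotopy fiber of the $(n+2)$-cube $\mathcal{E}_{n+1}\to\mathcal{B}_{n+1}$ coincides with that of $\tilde{\mathcal{F}}_{n+1}$, invoking \cite[3.8]{Ching_Harper} for the cartesian estimate on the former cube.

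One point does need fixing. Showing that the $(n+1)$-cube $\tilde{\mathcal{F}}_{n+1}$ is $(n+1)$-cartesian is \emph{not} the same as showing it is $\id$-cartesian: by definition (Section~\ref{sec:appendix}), the latter demands that every $d$-dimensional subcube be $d$-cartesian, not just the top cube. Your sentence ``i.e., $\id$-cartesian for an $(n+1)$-cube'' conflates the two, and the subcubes of $\tilde{\mathcal{F}}_{n+1}$ are not themselves coface cubes $\tilde{\mathcal{F}}_{m}$ for smaller $m$, so varying $n$ does not cover them. The paper closes this gap with one line---``Repeating this argument on all subcubes completes the proof''---which works because $\mathcal{E}_{n+1}$ and $\mathcal{B}_{n+1}$ are $(\id+1)$-cartesian, so every pair of corresponding subcubes already carries the needed estimate. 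You should add this step. (Separately, your numerics have a cancelling off-by-one: an $(n+2)$-cartesian cube has $(n+1)$-connected total fiber, so the fiber sequence gives $\tilde{\mathcal{F}}_{n+1}$ an $n$-connected total fiber, hence it is $(n+1)$-cartesian; the conclusion is unchanged, and you already flagged indexing as a concern.)
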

\begin{proof}
	It follows from \ref{Higher_TQ_Hurewicz_Theorem} that both $\mathcal{E}_{n+1}$ and $\mathcal{B}_{n+1}$ are $(n+2)$-cartesian and so \cite[3.8]{Ching_Harper} the cube $\mathcal{E}_{n+1} \to \mathcal{B}_{n+1}$ is $(n+1)$-cartesian. This means that the iterated homotopy fiber \cite[2.6]{Ching_Harper_derived_Koszul_duality} of $\mathcal{E}_{n+1} \to \mathcal{B}_{n+1}$ is $n$-connected. Since this is weakly equivalent to the iterated homotopy fiber of $\tilde{\mathcal{F}}_{n+1}$, we conclude that $\tilde{\mathcal{F}}_{n+1}$ is $(n+1)$-cartesian. Repeating this argument on all subcubes completes the proof.
\end{proof}

The following two short lemmas are used in the proof of Proposition \ref{prop:UQ_preserves_id_cartesian}, which states that levelwise application of the $\TQ$-homology functor preserves this cartesian estimate on $\tilde{\mathcal{F}}_{n+1}$.

\begin{lem}\label{lem:U_preserves_cartesian}
	Let $k \in \mathbb{Z}$ and let $\mathcal{Y}$ be a $W$-cube in $J$-algebras. If $\mathcal{Y}$  is $k$-cartesian, then so is $U\mathcal{Y}$.
\end{lem}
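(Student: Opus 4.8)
The plan is to exploit the fact that $U\colon \AlgJ \to \AlgO$ is simply the forgetful functor along the operad map $\capO \to J$, so that the underlying $\capR$-module (equivalently, underlying symmetric spectrum) of $UY$ coincides with that of $Y$ for every $J$-algebra $Y$. Two consequences will drive the argument. First, since weak equivalences in both $\AlgJ$ and $\AlgO$ are created by the respective forgetful functors down to $\ModR$, and these forgetful functors factor through $U$, the functor $U$ preserves \emph{all} weak equivalences, not merely those between fibrant objects. Second, for the same reason $\pi_\ast(UY)\cong\pi_\ast(Y)$ as graded abelian groups, so $U$ both preserves and reflects $m$-connectedness of objects for every $m\in\mathbb{Z}$.

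Next I would record that $U$ preserves homotopy limits of small diagrams. Indeed, $U$ is a right adjoint (with left adjoint $Q$, see \eqref{eq:O_J_tau_adjunctions}), hence preserves all small limits; it is right Quillen for the positive flat stable model structures, hence preserves Reedy fibrant diagrams; and it preserves all weak equivalences by the previous paragraph. Since $\holim$ may be computed by Reedy fibrant replacement followed by $\limit$, it follows that the natural map $U(\holim D)\to\holim(UD)$ is a weak equivalence for every small diagram $D$ of $J$-algebras. In particular $U$ commutes, up to weak equivalence, with the formation of homotopy fibers (noting $U$ preserves the terminal object).

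The main step is then formal. Recall from \cite[2.6]{Ching_Harper_derived_Koszul_duality} that a $W$-cube $\mathcal{Y}$ of $J$-algebras is $k$-cartesian if and only if its iterated homotopy fiber — which is assembled by iterated formation of homotopy fibers, hence is a finite homotopy limit of the entries of $\mathcal{Y}$ — has the connectivity prescribed by $k$ in the conventions of Section \ref{sec:appendix}. Applying the previous paragraph to each homotopy fiber appearing in this iterated construction yields a weak equivalence between $U$ applied to the iterated homotopy fiber of $\mathcal{Y}$ and the iterated homotopy fiber of $U\mathcal{Y}$. Since $U$ preserves connectivity of objects, the latter has the required connectivity precisely when the former does; that is, $U\mathcal{Y}$ is $k$-cartesian exactly when $\mathcal{Y}$ is, which proves the lemma (in fact in both directions).

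I do not expect a serious obstacle here. The only point deserving care is the claim that $U$ preserves homotopy limits (equivalently homotopy fibers), as opposed to merely finite products: this rests precisely on the observation, recorded in the first paragraph, that $U$ preserves all weak equivalences because weak equivalences of $\capO$-algebras and of $J$-algebras are both detected on underlying $\capR$-modules.
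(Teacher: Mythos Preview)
Your proof is correct and follows essentially the same approach as the paper's one-line argument: both rest on the facts that $U$ is a right Quillen functor (hence commutes with the relevant homotopy limits) and that connectivity is computed in the underlying category $\ModR$ (hence is preserved by $U$). You have simply unpacked these two points in more detail, phrasing the cartesian condition via the iterated homotopy fiber rather than directly via the comparison map $\mathcal{Y}_\emptyset\to\holim_{\mathcal{P}_0(W)}\mathcal{Y}$, but the substance is identical.
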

\begin{proof}
	This is because $U$ is a right Quillen functor and preserves connectivity of all maps, since this connectivity is calculated in the underlying category $\ModR$.
\end{proof}

\begin{lem}\label{lem:Q_preserves_cocartesian}
	Let $k \geq -1$ and let $\mathcal{X}$ be an objectwise cofibrant $W$-cube in $\capO$-algebras. If $\mathcal{X}$ is $k$-cocartesian, then so is $Q\mathcal{X}$. 
\end{lem}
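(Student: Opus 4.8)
The plan is to combine two facts: $Q=J\circ_\capO(-)$ is a left Quillen functor (the left adjoint of the Quillen adjunction \eqref{eq:O_J_tau_adjunctions}), and, thanks to Assumption \ref{assumption}, $Q$ does not decrease the connectivity of maps between cofibrant $\capO$-algebras. Recall (see Section \ref{sec:appendix}) that $\mathcal{X}$ is $k$-cocartesian precisely when the canonical comparison map
\[
\hocolim_{S\subsetneq W}\mathcal{X}(S)\longrightarrow\mathcal{X}(W)
\]
out of the punctured cube is $k$-connected. Since $\mathcal{X}$ is objectwise cofibrant and $Q$ is left Quillen, $Q$ preserves the homotopy colimit of the punctured-cube diagram (passing, if needed, to a projectively cofibrant model of that diagram, which changes nothing up to weak equivalence); consequently the canonical comparison map for $Q\mathcal{X}$ is weakly equivalent to $Q$ applied to the one for $\mathcal{X}$. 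As $\mathcal{X}$ is $k$-cocartesian, this is $Q$ applied to a $k$-connected map of cofibrant $\capO$-algebras, so it remains only to verify that such a map stays $k$-connected after applying $Q$.

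This connectivity estimate is where Assumption \ref{assumption} is essential. The operad $J$ is weakly equivalent to $\tau_1\capO$, which is reduced and $(-1)$-connected because $\capO$ is; in particular $J[0]\simeq\ast$ and each $J[t]$ is $(-1)$-connected. For a $k$-connected map $g$ of cofibrant $\capO$-algebras (with $k\geq -1$), the map $Q(g)=J\circ_\capO g$ can be analyzed through the usual skeletal (bar-type) filtration of the relative composition product, whose layers are assembled from the functors $J[t]\wedge_{\Sigma_t}(-)^{\wedge t}$: the $t=0$ layer is weakly trivial, the $t=1$ layer $J[1]\wedge_\capR g$ is $k$-connected because $\capR$ and $J[1]$ are $(-1)$-connected and smashing (and forming $\Sigma_t$-homotopy orbits) against $(-1)$-connected objects over the $(-1)$-connected ring $\capR$ does not lower connectivity, and the $t\geq 2$ layers are at least as connected. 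Hence $Q(g)$ is $k$-connected; compare the connectivity machinery for $\circ_\capO$ developed in \cite{Harper_symmetric_spectra, Harper_Hess}. Together with the previous paragraph, it follows that $Q\mathcal{X}$ is $k$-cocartesian.

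I expect the connectivity estimate of the second paragraph to be the main obstacle: one has to set up the filtration of $J\circ_\capO g$ carefully and track connectivity through geometric realization. The left Quillen bookkeeping in the first paragraph---arranging enough cofibrancy for $Q$ to compute the derived homotopy colimit on the nose---is routine.
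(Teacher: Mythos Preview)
Your argument follows the same two-step structure as the paper's proof: first reduce to the statement that $Q$ preserves $k$-connectivity of maps between cofibrant $\capO$-algebras (using that $Q$ is left Quillen and hence commutes with the punctured-cube homotopy colimit), then verify that connectivity estimate. The paper handles the second step more efficiently: rather than setting up a filtration of $J\circ_\capO g$ by hand, it observes that for cofibrant inputs $UQ\wequiv\TQ$, invokes \cite[1.9(b)]{Harper_Hess} (which says $\TQ$ preserves $n$-connected maps), and uses that $U$ reflects connectivity. Your direct filtration sketch is in the right spirit but somewhat imprecise---the layers of $J\circ_\capO(-)$ are not literally $J[t]\wedge_{\Sigma_t}(-)^{\wedge t}$ (that is $J\circ(-)$, not the relative version); the actual argument goes through a bar resolution $|\BAR(J,\capO,-)|$, and this is exactly the machinery behind the cited result in \cite{Harper_Hess}. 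So your proof is correct once one either tightens the filtration description or, as the paper does, simply cites the existing result.
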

\begin{proof}
	If $\abs{W} = 0$ or 1, note that an $\capO$-algebra (resp. a map between $\capO$-algebras) is $k$-cartesian if and only if it is $k$-connected. The result now follows from \cite[1.9(b)]{Harper_Hess} and the observation that if $X$ is cofibrant, then $\TQ(X) \simeq UQ(X)$. To show, more generally, that $Q\mathcal{X}$ is $k$-cocartesian, let $\mathcal{P}_1W$ be the poset of subsets $V \subsetneqq W$. By assumption, $\hocolim_{\mathcal{P}_1W}\mathcal{X} \to \mathcal{X}_W$ is a $k$-connected map of cofibrant objects, so $\hocolim_{\mathcal{P}_1W}Q\mathcal{X} \simeq Q\hocolim_{\mathcal{P}_1W}X \to Q\mathcal{X}_W$ is also $k$-connected, by the first part of the proof.
\end{proof}
\begin{prop}\label{prop:UQ_preserves_id_cartesian}
	Let $\mathcal{X}$ be a $W$-cube in $\capO$-algebras. If $\mathcal{X}$ is objectwise cofibrant and is $\id$-cartesian, then so is $UQ\mathcal{X}$.
\end{prop}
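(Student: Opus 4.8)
The plan is to reduce the proposition to a higher homotopy excision estimate in $\AlgO$, playing the two preceding lemmas off against the fact that $\AlgJ$ is a \emph{stable} model category (it is Quillen equivalent to $\Mod_{\capO[1]}$; see \cite[7.21]{Harper_Hess}). Recall that in a stable model category the total homotopy fiber and total homotopy cofiber of a $W$-cube $\capY$ differ by an $\abs{W}$-fold suspension, $\mathrm{tcofib}(\capY)\simeq\Sigma^{\abs{W}}\mathrm{tfib}(\capY)$; hence $\capY$ is $c$-cartesian if and only if it is $(c+\abs{W})$-cocartesian. In particular, for the cube $Q\capX$ of $J$-algebras --- which is objectwise cofibrant, since $Q$ is left Quillen --- being $\id$-cartesian is \emph{equivalent} to the requirement that each of its $V$-subcubes be $2\abs{V}$-cocartesian. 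Granting that requirement, Lemma \ref{lem:U_preserves_cartesian} applied one subcube at a time immediately yields that $UQ\capX$ is $\id$-cartesian, as desired.

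So the task becomes: show that every $V$-subcube of $Q\capX$ is $2\abs{V}$-cocartesian. By Lemma \ref{lem:Q_preserves_cocartesian} it suffices to prove the corresponding statement for $\capX$ itself --- that $\capX$ is ``$2\cdot\id$-cocartesian'', i.e. that the total homotopy cofiber of each $V$-subcube $\capX|_V$ is $(2\abs{V}-1)$-connected. The hypothesis that $\capX$ is $\id$-cartesian records all the relevant connectivity simultaneously: every vertex is $(-1)$-connected, every edge is $1$-connected, and more generally every $\abs{V}$-subcube is $\abs{V}$-cartesian. Feeding exactly this data into the Blakers--Massey / higher homotopy excision machinery for $\capO$-algebras of \cite{Ching_Harper} should upgrade ``$\abs{V}$-cartesian'' to ``$2\abs{V}$-cocartesian''; this is the $\capO$-algebra counterpart --- valid in the connective range we are working in --- of the stable equivalence ``$c$-cartesian $\iff$ $(c+n)$-cocartesian'' used above. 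The low-dimensional cases $\abs{W}=0$ and $\abs{W}=1$ should be treated directly, just as in the proof of Lemma \ref{lem:Q_preserves_cocartesian}: one uses that $\TQ(X)\simeq UQ(X)$ preserves $(-1)$-connectivity of objects when $\abs{W}=0$, and sends $1$-connected maps to $1$-connected maps when $\abs{W}=1$, in both cases via \cite[1.9(b)]{Harper_Hess}.

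The delicate point, and where I expect the real work to lie, is the numerology in the excision step: one must check that the Blakers--Massey estimates for $\capO$-algebras, fed the full $\id$-cartesian input (the connectivities of \emph{all} the gluing maps of all subcubes, not just the top cube), yield \emph{exactly} the improvement $\abs{V}\mapsto 2\abs{V}$ needed to absorb the $\Sigma^{\abs{W}}$-shift coming from stability of $\AlgJ$ --- no more and no less. An improvement only to, say, $\abs{V}+1$ would show $UQ\capX$ is cartesian to a degree strictly below $\id$, which would not suffice for the inductive argument in Section \ref{sec:horizontal_direction}; it is precisely the sharpness of the excision bound in this highly connective regime that makes the whole argument close up. It is probably cleanest to organize the excision step as an induction on $\abs{W}$, with the $\id$-cartesianness of all proper subcubes of $\capX$ serving as the inductive input.
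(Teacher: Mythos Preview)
Your approach is exactly the paper's: convert the $\id$-cartesian estimate on $\capX$ to a cocartesian one via the higher \emph{dual} Blakers--Massey theorem \cite[1.11]{Ching_Harper}, push that through $Q$ using Lemma~\ref{lem:Q_preserves_cocartesian}, invoke stability of $\AlgJ$ to convert back to cartesian, and finish with Lemma~\ref{lem:U_preserves_cartesian}; the argument is organized as an induction on $|W|$ with the cases $|W|\leq 2$ done by hand.

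One point of care in the numerology you flagged: the higher dual Blakers--Massey estimate, fed the $\id$-cartesian input $k_V=|V|$, gives that $\capX$ is only $(2n-1)$-cocartesian (the minimum is achieved by the term $k_W+|W|-1=2n-1$, not by any nontrivial partition), not $2n$-cocartesian as you wrote. Correspondingly, the stable conversion in \cite[3.10]{Ching_Harper} is $k$-cocartesian $\Longleftrightarrow$ $(k-|W|+1)$-cartesian, not $(k-|W|)$-cartesian; your formula $\mathrm{tcofib}\simeq\Sigma^{|W|}\mathrm{tfib}$ is right, but the off-by-one between the cartesian and cocartesian conventions shifts things by one. The two discrepancies cancel, so the argument closes up exactly as in the paper: $(2n-1)$-cocartesian in $\AlgJ$ yields $n$-cartesian, which is what is needed.
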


\begin{rem}\label{rem:low_dim_example}
	Before we give the proof of Proposition \ref{prop:UQ_preserves_id_cartesian} in full generality, here is the argument assuming that $\mathcal{X}$ is a 2-cube, i.e., that $W = \sett{1,2}$. In this case, $\mathcal{X}$ is the commutative diagram
	\begin{align*}
	\xymatrix{
		\mathcal{X}_\emptyset \ar[r] \ar[d] & \mathcal{X}_{\{1\}} \ar[d]\\
		\mathcal{X}_{\{2\}} \ar[r] & \mathcal{X}_{\{1,2\}}
	}
	\end{align*}
	
	\noindent
	in $\AlgO$, where each object is $(-1)$-connected (i.e., 0-cartesian as a 0-cube), each map is 1-connected (i.e., 1-cartesian as a 1-cube), and the entire square is 2-cartesian. 
	
	That the objects and maps of $UQ\capX$ are appropriately connected follows as in the proof of Lemma \ref{lem:Q_preserves_cocartesian}. Let us now show that $UQ\mathcal{X}$ is 2-cartesian. The dual Blakers-Massey theorem of Ching-Harper \cite[1.9]{Ching_Harper} implies that $\mathcal{X}$ is $k$-cocartesian, where
	\[
	k = \min\sett{k_{12}+1, k_1+k_2+2} = \min\sett{2+1, 1+1+2}=3
	\] 
	By Lemma \ref{lem:Q_preserves_cocartesian}, this means that $Q\mathcal{X}$ is also 3-cocartesian. It is now important to observe that $Q\mathcal{X}$ is a diagram in the stable category $\AlgJ$, so the fact that it is 3-cocartesian implies it is 2-cartesian; see \cite[3.10]{Ching_Harper}. Hence, by Lemma \ref{lem:U_preserves_cartesian}, $UQ\mathcal{X}$ is also 2-cartesian.
	
	To see that $UQ\mathcal{X}$ is objectwise cofibrant, recall that $Q$ is a left Quillen functor and that \cite[5.49]{Harper_Hess} the functor $U$ preserves cofibrant objects.
\end{rem}

\begin{proof}[Proof of Proposition \ref{prop:UQ_preserves_id_cartesian}]
	Objectwise cofibrancy is proven in the same way as in Remark \ref{rem:low_dim_example}. To show that $UQ\mathcal{X}$ is $\id$-cartesian, we induct on $n$. The cases $\abs{W} = 0, 1,2$ are handled in Remark \ref{rem:low_dim_example}. Suppose now $\mathcal{X}$ is a $W$-cube with $\abs{W} =n \geq 3$ and that the result holds for all $k$-cubes with $k < n$. This verifies that $UQ\mathcal{X}$ is $\id$-cartesian on all strict subcubes, so we must only further show that $UQ\mathcal{X}$ is itself $n$-cartesian. 
	
	As in Remark \ref{rem:low_dim_example}, we first establish a cocartesian estimate on $\mathcal{X}$, but now use the higher dual Blakers-Massey Theorem of Ching-Harper. Adopting the notation of \cite[1.11]{Ching_Harper}, observe that each $k_V$ (the cartesianness of a particular $\abs{V}$-dimensional subcube of $\mathcal{X}$) is equal to $\abs{V}$ by assumption that $\mathcal{X}$ is $\id$-cartesian. It follows that for any partition $\lambda$ of $W$, we have
	\[
	\abs{W} + \sum_{V \in \lambda}k_V = n + \sum_{V \in \lambda}\abs{V} = n + n = 2n
	\]
	On the other hand, 
	\[
	k_W + \abs{W} - 1 = n + n -1 = 2n-1
	\]
	Hence, $\mathcal{X}$ is $(2n-1)$-cocartesian. By Lemma \ref{lem:Q_preserves_cocartesian}, this means $Q\mathcal{X}$ is also $(2n-1)$-cocartesian. Since $Q\mathcal{X}$ is in the stable category $\AlgJ$, the proof of \cite[3.10]{Ching_Harper} implies that $Q\mathcal{X}$ is $(2n-1)-n+1 = n$-cartesian. Therefore, by Lemma \ref{lem:U_preserves_cartesian}, $UQ\mathcal{X}$ is also $n$-cartesian.
\end{proof}

We are now in a position to prove the main result of this section. 
\begin{prop}\label{prop:horizontal_direction}
	Let $n\geq -1$ and $k\geq 0$. The coface $(n+1)$-cube associated to $(UQ)^kF\rightarrow(UQ)^k\tilde{F}$ is $\id$-cartesian. In particular, the natural map $(UQ)^kF \to \holim_\Delta(UQ)^k\tilde{F}$ is a weak equivalence.
\end{prop}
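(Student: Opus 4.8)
The plan is to establish the cartesian estimate on the coface $(n+1)$-cubes of $(UQ)^kF\to(UQ)^k\tilde F$ by induction on $k$, with the base case $k=0$ being exactly Proposition \ref{prop:uniform_cartesian_estimates_for_F}. First I would observe that the coface $(n+1)$-cube associated to $(UQ)^kF\to(UQ)^k\tilde F$ is precisely the cube obtained by applying the functor $UQ$ objectwise, $k$ times, to the coface $(n+1)$-cube $\tilde{\mathcal F}_{n+1}$ associated to $F\to\tilde F$ (since $UQ$ is applied levelwise in \eqref{eq:resolution_of_F_diagram}, and forming the coface cube of a coaugmented cosimplicial object commutes with objectwise application of a functor). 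So the inductive step reduces to: if the coface $(n+1)$-cube of $(UQ)^{k}F\to(UQ)^{k}\tilde F$ is $\id$-cartesian, then so is the coface $(n+1)$-cube of $(UQ)^{k+1}F\to(UQ)^{k+1}\tilde F$.

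For this I would invoke Proposition \ref{prop:UQ_preserves_id_cartesian}: that result says exactly that objectwise application of $UQ$ preserves the property of being $\id$-cartesian, provided the cube is objectwise cofibrant. The cofibrancy hypothesis is handled as in the proof of Proposition \ref{prop:UQ_preserves_id_cartesian} — $Q$ is left Quillen and $U$ preserves cofibrant objects by \cite[5.49]{Harper_Hess}, and the original cube $\tilde{\mathcal F}_{n+1}$ is objectwise cofibrant by construction (the $\tilde F^n$ were defined via objectwise functorial cofibrant replacement, and $F$ is cofibrant since $E,B$ were taken cofibrant and $F$ is a homotopy fiber followed by cofibrant replacement). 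Thus objectwise cofibrancy is maintained at every stage of the induction, and Proposition \ref{prop:UQ_preserves_id_cartesian} applies at each step, completing the induction and proving the first assertion.

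For the "in particular" clause, I would translate the cartesian estimate on all the coface cubes into a statement about the cosimplicial diagram $(UQ)^k\tilde F$ and its coaugmentation. Being $\id$-cartesian on the $(n+1)$-coface cube for every $n\geq -1$ means precisely that the coaugmented cosimplicial $\capO$-algebra $(UQ)^kF\to(UQ)^k\tilde F$ satisfies the hypotheses needed to conclude convergence of the associated $\holim_\Delta$ tower; this is the kind of cartesian-estimate-implies-$\holim$-equivalence argument recorded in \cite[Section 8]{Ching_Harper_derived_Koszul_duality} (or equivalently, the coface cubes being $\id$-cartesian forces the relevant $\lim^1$ and $\lim$ terms to behave so that the coaugmentation $(UQ)^kF\to\holim_\Delta(UQ)^k\tilde F$ is a weak equivalence). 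I would cite that machinery rather than reprove it.

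The main obstacle I anticipate is bookkeeping rather than conceptual: making sure that "the coface $(n+1)$-cube of $(UQ)^k(F\to\tilde F)$ equals $(UQ)^k$ applied to the coface $(n+1)$-cube of $F\to\tilde F$" is correctly stated, and that the objectwise cofibrancy hypothesis genuinely survives iteration so that Proposition \ref{prop:UQ_preserves_id_cartesian} can be invoked repeatedly. Everything else is a direct citation — Proposition \ref{prop:uniform_cartesian_estimates_for_F} for the base case, Proposition \ref{prop:UQ_preserves_id_cartesian} for the inductive step, and \cite[Section 8]{Ching_Harper_derived_Koszul_duality} for the final weak-equivalence conclusion.
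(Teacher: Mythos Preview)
Your proposal is correct and follows essentially the same approach as the paper: induct on $k$ using Proposition~\ref{prop:uniform_cartesian_estimates_for_F} as the base case and Proposition~\ref{prop:UQ_preserves_id_cartesian} for the inductive step, then deduce the weak equivalence from the cartesian estimates. The only cosmetic difference is that the paper phrases the ``in particular'' clause by invoking Proposition~\ref{prop:cofinality} to see that the map $(UQ)^kF \to \holim_{\Delta^{\leq n}}(UQ)^k\tilde{F}$ is $(n+1)$-connected and then applying the $\lim^1$ short exact sequence, whereas you cite the machinery of \cite[Section~8]{Ching_Harper_derived_Koszul_duality} more generically---but this is the same argument.
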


\begin{proof}
	The first part follows inductively from Propositions \ref{prop:uniform_cartesian_estimates_for_F} and \ref{prop:UQ_preserves_id_cartesian}. The second part follows by observing that this cartesian estimate implies that the natural map $(UQ)^kF \to \holim_{\Delta^\leq n}(UQ)^k\tilde{F}$ is $(n+1)$ connected (see Proposition \ref{prop:cofinality}), then using the associated $\lim^1$ short exact sequence.
\end{proof}

\begin{rem}
	The increasing connectivity proven in Proposition \ref{prop:horizontal_direction} implies that, for all $k \geq 0$, the map of towers $\sett{(UQ)^kF}_s \to \sett{\mathsf{Tot}_s(UQ)^k\tilde{F}}_s$ is a pro-$\pi_\ast$ isomorphism.
\end{rem}

\begin{rem}
	If one relaxes the connectivity assumptions on $E, B$, but can still show that $\tilde{\mathcal{F}}_{n+1}$ is $\id$-cartesian, then Proposition \ref{prop:horizontal_direction} remains valid. In this case, since the connectivities of $E, B$ do not play a role in the following section, the conclusion of Theorem \ref{MainTheorem1} also remains valid. We thank the referee for pointing this out.
\end{rem}

\section{Analysis of the vertical direction}
The purpose of this section is to analyze the maps $(**)$ and, in particular, to prove Proposition \ref{prop:vertical_direction}. The basic idea is to first show that, up to homotopy, there is an extra codegeneracy in each coaugmented cosimplicial diagram $\tilde{F}^n \to (UQ)^{\bullet+1}\tilde{F}^n$. This is accomplished by showing that each $\tilde{F}^n$ is weakly equivalent to an $\capO$-algebra of the form $UY$ and observing that the diagram $UY \to (UQ)^{\bullet+1}UY$ has an extra codegeneracy on the nose. A short spectral sequence argument then completes the analysis. 

\begin{lem}\label{lem:Fn_weak_equiv_to_J_alg}
	For each $n \geq 0$, there is a fibrant and cofibrant $J$-algebra $G^n$ with a natural zigzag of weak equivalences $UG^n \simeq \tilde{F}^n$ in $\AlgO$.
\end{lem}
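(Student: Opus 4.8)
The plan is to exhibit each $\tilde F^n$ as (weakly equivalent to) the underlying $\capO$-algebra of a $J$-algebra, using the fact that each level $\tilde F^n$ is built as a homotopy fiber of a map between $(UQ)$-type objects, which are themselves underlying $\capO$-algebras of $J$-algebras. Concretely, recall from \eqref{eq:levelwise_fiber_sequences} that $\tilde F^n$ is a functorial cofibrant replacement of the homotopy fiber of the map $(UQ)^{n+1}E \to (UQ)^{n+1}B$ in $\AlgO$. Since $E,B$ are cofibrant, $(UQ)^{n+1}E = U\bigl(Q(UQ)^nE\bigr)$ and $(UQ)^{n+1}B = U\bigl(Q(UQ)^nB\bigr)$ are of the form $UA^n$ and $UB^n$ for cofibrant $J$-algebras $A^n := Q(UQ)^nE$ and $B^n := Q(UQ)^nB$; moreover the coface map is $U$ applied to a map of $J$-algebras $A^n \to B^n$ (this uses that $Q$ is a left adjoint and that $U$ preserves cofibrant objects, \cite[5.49]{Harper_Hess}).

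First I would replace the map $A^n \to B^n$, inside $\AlgJ$, by a fibration between fibrant-cofibrant $J$-algebras, say $\bar A^n \twoheadrightarrow \bar B^n$, via a functorial factorization preceded by fibrant replacement; call its (strict, hence homotopy) fiber $G^n$, which is then a fibrant and cofibrant $J$-algebra since $\AlgJ$ admits functorial such replacements and the fiber of a fibration between fibrant objects is fibrant, while cofibrancy of the fiber follows from the cofibrancy condition in Assumption \ref{assumption} (the relevant point being that $J$ inherits the needed cofibrancy so that fibers of fibrations of cofibrant $J$-algebras can be arranged cofibrant, or else one takes a further cofibrant replacement which does not disturb the weak equivalence type). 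Next I would check that $U$ applied to this construction computes the homotopy fiber in $\AlgO$: since $U$ is a right Quillen functor it preserves fibrations and fibrant objects and hence preserves homotopy pullbacks of diagrams of fibrant objects, so $UG^n$ is a model for the homotopy fiber of $U\bar A^n \to U\bar B^n$. Finally, since $U\bar A^n \simeq UA^n = (UQ)^{n+1}E$ and similarly for $B$ — the weak equivalences being natural because all replacements are functorial — the induced map on homotopy fibers gives a natural zigzag of weak equivalences in $\AlgO$ from $UG^n$ to the homotopy fiber of $(UQ)^{n+1}E \to (UQ)^{n+1}B$, which is naturally weakly equivalent to $\tilde F^n$ by construction.

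The main obstacle I anticipate is the cofibrancy of $G^n$ as a $J$-algebra: the forgetful functor $U$ only transfers cofibrancy downward (from $\AlgJ$ to $\AlgO$), so one cannot simply pull the cofibrant $\capO$-algebra $\tilde F^n$ back up; instead one must produce cofibrancy on the $J$-algebra side directly, either by arranging that the fiber of the chosen fibration $\bar A^n \twoheadrightarrow \bar B^n$ is already cofibrant (which needs the operad $J$ to be suitably cofibrant — this is precisely where Assumption \ref{assumption}, via the factorization $\capO \to J \to \tau_1\capO$ through a cofibration, is doing work) or by taking a cofibrant replacement $G^n \xrightarrow{\sim} \text{fiber}$ in $\AlgJ$ and absorbing it into the zigzag. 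The second option is cleaner and I would take it: it costs nothing since we only need a zigzag of weak equivalences and $U$ sends the acyclic (co)fibration $G^n \xrightarrow{\sim} \text{fiber}$ to a weak equivalence in $\AlgO$. A secondary bookkeeping point is ensuring every replacement (fibrant replacement in $\AlgJ$, the factorization, the cofibrant replacement, and the earlier cofibrant replacement of $\tilde F$ in $\AlgO$) is chosen functorially in $n$ so the resulting zigzag is natural in the cosimplicial variable — this is needed for the extra-codegeneracy argument in the next step of Section \ref{sec:horizontal_direction}, but each of these model-categorical constructions is available functorially, so no real difficulty arises.
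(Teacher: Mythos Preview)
Your proposal is correct and follows essentially the same approach as the paper: recognize that $(UQ)^{n+1}E \to (UQ)^{n+1}B$ is $U$ applied to a map in $\AlgJ$, use that $U$ (being right Quillen) commutes with homotopy fibers, and then take a functorial cofibrant replacement in $\AlgJ$ to obtain $G^n$. The paper compresses all of this into the single line $\tilde F^0 \simeq \hofib(UQE \to UQB) \simeq U\hofib(QE \to QB)$ and sets $G^0$ to be the functorial cofibrant replacement of $\hofib(QE \to QB)$ in $\AlgJ$; your version just unpacks the intermediate replacements more explicitly.
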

\begin{proof}
	We will prove the $n=0$ case. The proof is essentially the same for $n \geq 1$. By definition, and commuting $U$ past a homotopy limit, we have a natural zigzag of weak equivalences
	\[
	\tilde{F}^0 \simeq \hofib(UQE \to UQB) \simeq U\hofib(QE \to QB)
	\]
	and the lemma follows by letting $G^0$ be the functorial cofibrant replacement of $\hofib(QE \to QB)$ in $\AlgJ$.
\end{proof}

\begin{lem}\label{lem:J_algebras_have_extra_codegeneracies}
	If $Y$ is in $\AlgJ$, the diagram $UY \to (UQ)^{\bullet+1}UY$ has an extra codegeneracy.
\end{lem}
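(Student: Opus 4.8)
The plan is to exhibit the extra codegeneracy explicitly from the counit of the adjunction $Q \dashv U$ and to recognize the resulting coaugmented cosimplicial $\capO$-algebra as the standard cosimplicial resolution of an algebra over a monad, for which an extra codegeneracy is classical. Write $T := UQ$ for the associated monad on $\AlgO$, with unit $\eta\colon\thinspace \id \to T$ (the $\TQ$-Hurewicz map) and multiplication $\mu := U\epsilon Q\colon\thinspace T^2 \to T$, where $\epsilon\colon\thinspace QU \to \id$ is the counit. The coaugmented cosimplicial $\capO$-algebra in question is $UY \to T^{\bullet+1}(UY)$, with coaugmentation $\eta_{UY}$, cofaces built from $\eta$, and codegeneracies built from $\mu$. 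The first step is to observe that, since $Y$ is a $J$-algebra, $UY$ is canonically an algebra over the monad $T$, with structure map $\alpha := U\epsilon_Y\colon\thinspace T(UY) = UQUY \to UY$; the two algebra axioms $\alpha\circ\eta_{UY} = \id$ and $\alpha\circ\mu_{UY} = \alpha\circ T\alpha$ are immediate from the triangle identities for $Q \dashv U$ and naturality of $\epsilon$.

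Given this, I would take the extra codegeneracy in cosimplicial degree $n$ to be
\[
s^{-1} \;:=\; (UQ)^n(U\epsilon_Y) \;=\; T^n(\alpha)\colon\thinspace (UQ)^{n+1}(UY) \longrightarrow (UQ)^n(UY),
\]
so that in degree $0$ it is simply $\alpha$, which retracts the coaugmentation $\eta_{UY}$. It then remains to verify the extra-codegeneracy identities relating $s^{-1}$ to the cofaces $d^i$ and codegeneracies $s^i$ of $T^{\bullet+1}(UY)$ --- namely $s^{-1}d^0 = \id$, together with the shifted commutation relations $d^i s^{-1} = s^{-1}d^{i+1}$ and $s^i s^{-1} = s^{-1}s^{i+1}$ in the appropriate ranges. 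Each of these is a short diagram chase: the first is the algebra unit axiom applied inside a power of $T$; the second uses naturality of $\eta$ together with naturality of $\eta$ at the morphism $\alpha$; the third uses naturality of $\mu$ and associativity of $T$. This is precisely the cosimplicial dual of the well-known extra degeneracy in the monadic bar construction $\BAR(T, T, M)$ of an algebra $M$ over a monad $T$, so I would either carry out the (routine) verification or simply invoke that standard fact; alternatively, one may note that $UY$ is a retract in $\AlgO$ of the free $T$-algebra $T(UY)$ via $\eta_{UY}$ and $\alpha$, although this alone only yields the weaker statement that the coaugmentation becomes a weak equivalence after $\holim_\Delta$.

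I do not expect a genuine obstacle here; the work is entirely bookkeeping. The one point requiring care is the indexing convention for the cofaces and codegeneracies of $(UQ)^{\bullet+1}(-)$: one must use the same convention as is fixed in Section \ref{sec:Background_on_TQ} (equivalently, as in \cite{Ching_Harper_derived_Koszul_duality}), since with the mirror convention the natural candidate for $s^{-1}$ fails the identity $s^{-1}d^0 = \id$ in positive cosimplicial degree. Checking that the conventions match up, and verifying the resulting (co)simplicial identities, is all that the proof amounts to.
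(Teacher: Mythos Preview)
Your proposal is correct and takes essentially the same approach as the paper: your extra codegeneracy $s^{-1} = (UQ)^n(U\epsilon_Y)$ is literally the same map as the paper's $U(QU)^n\epsilon$, just written with the parentheses grouped differently. You supply considerably more detail than the paper's one-line proof (framing it via the monad $T = UQ$ and the $T$-algebra structure on $UY$), but the underlying construction is identical.
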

\begin{proof}
	One obtains an extra codegeneracy by defining $s^n = U(QU)^{n+1}Y \overset{U(QU)^n\epsilon}{\to} U(QU)^nY$ for all $n \geq 0$, where $\epsilon$ is the counit associated to the $(Q, U)$ adjunction.
\end{proof}
\begin{lem}\label{lem:extra_codegeneracy}
	If the coaugmented cosimplicial $\capO$-algebra $X^{-1} \to X$ has an extra codegeneracy and $X^{-1}$ is fibrant, then the natural map $\sett{X^{-1}}_s \to \sett{\mathsf{Tot}_sX}_s$ is a pro-$\pi_\ast$ isomorphism.
\end{lem}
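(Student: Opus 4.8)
The plan is to use the extra codegeneracy to produce a simplicial contraction that exhibits $X^{-1}$ as a homotopy retract of $\mathsf{Tot}_s X$ in a way compatible with the tower structure, and then extract the pro-$\pi_\ast$ statement. First I would recall the standard fact (see, e.g., \cite[Section~8]{Ching_Harper_derived_Koszul_duality} or the classical Bousfield-Kan discussion) that an extra codegeneracy on a coaugmented cosimplicial object $X^{-1}\to X$ makes the coaugmentation a deformation retract at each finite stage: concretely, for each $s$ the map $X^{-1}\to \Tot_s(RX)$ admits a retraction, and the composite $\Tot_s(RX)\to X^{-1}\to \Tot_s(RX)$ is homotopic to the identity, with all homotopies natural in $s$ (they come from the tower-compatible structure maps $\Tot_{s+1}\to \Tot_s$ commuting with the contractions built from $s^{-1}$). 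Since $X^{-1}$ is fibrant, $RX$ may be chosen with $(RX)^{-1}=X^{-1}$, so no derived-functor subtlety intervenes at level $-1$.

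Next I would observe that a levelwise deformation retract of towers is in particular a levelwise weak equivalence, hence induces an isomorphism $\sett{\pi_n X^{-1}}_s \to \sett{\pi_n \Tot_s(RX)}_s$ of towers for every $n\in\ZZ$ — but here the source tower is constant. A constant tower that is levelwise isomorphic to $\sett{\pi_n\Tot_s(RX)}_s$ is certainly pro-isomorphic to it; in fact it is isomorphic in the category of towers. This is exactly the assertion that $\sett{X^{-1}}_s\to\sett{\mathsf{Tot}_sX}_s$ is a pro-$\pi_\ast$ isomorphism, so the lemma follows.

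Alternatively, and perhaps cleaner for the write-up, I would package the extra codegeneracy as producing a simplicial homotopy equivalence after applying $\Tot_s$, invoking the standard result that $\Tot_s$ of a Reedy fibrant cosimplicial object equipped with an extra codegeneracy is a homotopy equivalence onto the coaugmenting object; one reference is the cosimplicial analog of the fact that a cosimplicial object with extra codegeneracy has contractible $\Tot$, adapted to finite $\Tot_s$ and made natural in the cosimplicial degree bound $s$. The only point requiring care is that the contracting homotopies must be natural with respect to the maps $\Tot_{s+1}\to\Tot_s$; this is automatic because those maps are induced by the inclusions $\Delta^{\leq s}\hookrightarrow\Delta^{\leq s+1}$ and the extra codegeneracy $s^n$ is defined uniformly in $n$, so the contraction restricts compatibly.

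The main obstacle — really the only one — is bookkeeping: making sure the homotopy equivalences at each finite Tot-stage assemble into a map of towers (so that "pro-$\pi_\ast$ isomorphism," rather than merely "levelwise $\pi_\ast$ isomorphism," is literally what one gets), and confirming that replacing $X$ by its Reedy fibrant replacement $RX$ does not disturb the extra codegeneracy up to the homotopy needed. Both are routine: the extra codegeneracy is preserved up to homotopy under Reedy fibrant replacement since $RX$ receives a Reedy equivalence from $X$ and $X^{-1}$ is already fibrant, and the tower-naturality is built into the construction of $\Tot_s$ as an end over the truncated simplex categories.
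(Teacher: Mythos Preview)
Your approach has a genuine gap: the claim that $X^{-1}\to \Tot_s(RX)$ is a \emph{levelwise} deformation retract for each fixed $s$ is false in general. An extra codegeneracy does give a retraction $r_s\colon \mathsf{Tot}_sX\to X^{-1}$ with $r_s\circ (X^{-1}\to\mathsf{Tot}_sX)=\id$, but the other composite is \emph{not} homotopic to the identity at each finite stage. Think of the simplicial dual: the bar resolution $B_\bullet(G,G,*)\to *$ has an extra degeneracy, yet $|\Sk_0 B_\bullet|\simeq G$, not $*$. Dually, in the very situation of the paper, $X^{-1}=UY$ while $\mathsf{Tot}_0X\simeq UQUY$, and the unit map $UY\to UQUY$ is typically not a weak equivalence. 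So the tower map $\{X^{-1}\}_s\to\{\mathsf{Tot}_sX\}_s$ is not a levelwise weak equivalence, and your second paragraph (which concludes a levelwise isomorphism of $\pi_n$-towers) does not go through. The contracting homotopy built from the $s^{-1}$'s uses maps that shift cosimplicial degree downward; at the bottom it leaves the truncation $\Delta^{\leq s}$, so it does not restrict to a contraction of $\Tot_s$.

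This is exactly why the paper's argument is organized differently. Rather than attempting a space-level contraction at each $\Tot_s$, the paper applies $\pi_n$ levelwise first: the resulting coaugmented cosimplicial abelian group $\pi_nX^{-1}\to\pi_nX$ inherits the extra codegeneracy, which forces $\pi^s\pi_nX=0$ for $s>0$ and $\pi^0\pi_nX\cong\pi_nX^{-1}$. This identifies the $E_2$-pages of the homotopy spectral sequences for the constant tower and for $\{\mathsf{Tot}_sX\}_s$, and then the comparison lemma \cite[8.36]{Ching_Harper_derived_Koszul_duality} converts an $E_2$-isomorphism into a pro-$\pi_*$ isomorphism of the towers. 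The spectral sequence packaging is what correctly accounts for the fact that the equivalence only emerges in the pro-category, not levelwise. Your ``bookkeeping'' obstacle about transferring the extra codegeneracy through Reedy fibrant replacement is also a real issue, but it is secondary to the one above; the paper sidesteps both by passing to homotopy groups before invoking the extra codegeneracy.
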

\begin{rem}
In the proof below, we use the spectral sequence associated to a tower of fibrations of $\capO$-algebras. For details of the construction, see \cite[8.31]{Ching_Harper_derived_Koszul_duality}. It is essentially the same as the homotopy spectral sequence  \cite[X.6]{Bousfield_Kan} of Bousfield-Kan; see also \cite[VIII.1]{Goerss_Jardine}.
\end{rem}
\begin{proof}
	Fix $n \in \mathbb{Z}$ and consider the coaugmented cosimplicial abelian group $\pi_nX^{-1} \to \pi_nX$. The assumed extra codegeneracy implies that for any $s \geq 0$, we have $\pi^s\pi_nX^{-1} \overset{\iso}{\to} \pi^s\pi_nX$. It follows that there is an induced isomorphism on $E^2$ pages of the homotopy spectral sequences associated to $\sett{X^{-1}}_s$ and $\sett{\mathsf{Tot}_sX}_s$. (Here, we are using the fact that, since $X^{-1}$ is fibrant, the constant cosimplicial diagram with value $X^{-1}$ is Reedy fibrant.) The result now follows from \cite[8.36]{Ching_Harper_derived_Koszul_duality}.
\end{proof}

\begin{prop}\label{prop:vertical_direction}
For each $n\geq 0$, the $\TQ$-completion map $\tilde{F}^n\wequiv ({\tilde{F}^n})^\wedge_\TQ$ is a weak equivalence.
\end{prop}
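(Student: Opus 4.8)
The plan is to combine the three lemmas just proven into a short chain of weak equivalences. Fix $n \geq 0$. By Lemma \ref{lem:Fn_weak_equiv_to_J_alg}, there is a fibrant and cofibrant $J$-algebra $G^n$ together with a natural zigzag of weak equivalences $UG^n \simeq \tilde{F}^n$ in $\AlgO$. Since $\TQ$-completion is built from a homotopy-invariant construction (it is $\holim_\Delta(UQ)^{\bullet+1}$ applied to a cofibrant replacement), this zigzag induces a corresponding zigzag $(UG^n)^\wedge_\TQ \simeq (\tilde{F}^n)^\wedge_\TQ$, and it fits into a commutative square with the two coaugmentation maps. Hence it suffices to prove the proposition with $\tilde{F}^n$ replaced by $UG^n$; that is, it suffices to show $UG^n \simeq (UG^n)^\wedge_\TQ$.

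For this, apply Lemma \ref{lem:J_algebras_have_extra_codegeneracies} with $Y = G^n$: the coaugmented cosimplicial $\capO$-algebra $UG^n \to (UQ)^{\bullet+1}UG^n$ has an extra codegeneracy. Since $G^n$ is cofibrant in $\AlgJ$ and $U$ preserves cofibrant objects (\cite[5.49]{Harper_Hess}), $UG^n$ is cofibrant in $\AlgO$, so $(UG^n)^\wedge_\TQ = \holim_\Delta (UQ)^{\bullet+1}(UG^n)$ on the nose (no further cofibrant replacement is needed). Moreover $UG^n$ is fibrant, since $G^n$ is fibrant and $U$ is a right Quillen functor. Lemma \ref{lem:extra_codegeneracy} therefore applies to $X^{-1} = UG^n$, $X = (UQ)^{\bullet+1}UG^n$, and gives that the natural map of towers $\sett{UG^n}_s \to \sett{\mathsf{Tot}_s(UQ)^{\bullet+1}UG^n}_s$ is a pro-$\pi_\ast$ isomorphism. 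Passing to homotopy limits (Remark \ref{rem:pro_pi_iso_implies_weak_equivalence}), the coaugmentation $UG^n \to \holim_\Delta(UQ)^{\bullet+1}UG^n = (UG^n)^\wedge_\TQ$ is a weak equivalence. Combining with the previous paragraph finishes the proof.

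The only point requiring care — and the place I expect to spend the most words — is the naturality and compatibility in the first paragraph: one must check that the zigzag of weak equivalences $UG^n \simeq \tilde{F}^n$ is genuinely compatible with the coaugmentations into the respective $\TQ$-completion towers, so that a weak equivalence on one side transfers to the other. This is where homotopy invariance of the functorial cofibrant replacement and of $\holim_\Delta(UQ)^{\bullet+1}(-)$ is used; since each stage $UQ$ is a composite of a left Quillen functor followed by a right Quillen functor on cofibrant objects, and $\holim_\Delta$ is homotopy invariant on Reedy fibrant diagrams, a levelwise weak equivalence of the cosimplicial resolutions (induced by the zigzag) yields a weak equivalence on $\holim_\Delta$, and one tracks the coaugmentations along the way. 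Everything else is a direct citation of the three lemmas above.
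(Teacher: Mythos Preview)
Your proof is correct and follows essentially the same route as the paper's: both arguments use Lemma~\ref{lem:Fn_weak_equiv_to_J_alg} to replace $\tilde{F}^n$ by $UG^n$, then Lemmas~\ref{lem:J_algebras_have_extra_codegeneracies} and~\ref{lem:extra_codegeneracy} to conclude. The paper handles your naturality concern by making the intermediate zigzag explicit---writing $UG^n \simeq (UG^n)^c \simeq (\tilde{F}^n)^c \simeq \tilde{F}^n$ with every object cofibrant, so that applying $(UQ)^{\bullet+1}$ and then $\mathsf{Tot}_s$ levelwise produces a commuting zigzag of \emph{towers}, from which the pro-$\pi_*$ isomorphism for $(**)$ is read off directly (this stronger tower statement is what Section~2 later invokes for the second part of Theorem~\ref{MainTheorem1}).
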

\begin{proof}
First, note that both $\tilde{F}^n$ and $UG^n$ (as constructed in Lemma \ref{lem:Fn_weak_equiv_to_J_alg}) are cofibrant. By taking further functorial replacements, it follows from Lemma \ref{lem:Fn_weak_equiv_to_J_alg} that there is a natural zigzag of weak equivalences $UG^n \simeq (UG^n)^c \simeq (\tilde{F}^n)^c \simeq \tilde{F}^n$ in which each object is cofibrant. This induces a zigzag of towers
\begin{align*}
\xymatrix@C=-1mm{
\sett{UG^n}_s \ar[d] & \simeq & \sett{\tilde{F}^n}_s \ar[d]^-{(**)}\\
\{\mathsf{Tot}_s(UQ)^{\bullet+1}UG^n\}_s & \simeq & \{\mathsf{Tot}_s(UQ)^{\bullet+1}\tilde{F}^n\}_s
}
\end{align*}
and, by Lemmas \ref{lem:J_algebras_have_extra_codegeneracies} and  \ref{lem:extra_codegeneracy}, the left-hand vertical map is a pro-$\pi_\ast$ isomorphism. It follows that $(**)$ is a pro-$\pi_\ast$ isomorphism as well. Hence, $\tilde{F}^n \simeq (\tilde{F}^n)_\TQ^\wedge$ is a weak equivalence (see Remark \ref{rem:pro_pi_iso_implies_weak_equivalence}). 
\end{proof}

\section{$\TQ$-completion of homotopy pullback squares}
\label{sec:homotopy_pullback}
In this section, we prove Theorem \ref{MainTheorem2}. The strategy of proof is essentially the same as that used in the proof of Theorem \ref{MainTheorem1}. The new arguments given in this section are needed to obtain an analogue of Proposition \ref{prop:uniform_cartesian_estimates_for_F}; this is the content of Proposition \ref{prop:uniform_cartesian_esimates_for_A} below.

As in the proof of Theorem \ref{MainTheorem1}, we may assume that $B, X, Y$ are cofibrant, and we then build the associated cosimplicial resolutions of $B, X, Y$ with respect to $\TQ$-homology; then take levelwise homotopy pullbacks to obtain a coagumented cosimplicial diagram $A \to \tilde{A}$. In other words, we obtain maps of coaugmented cosimplicial $\capO$-algebras of the form
\begin{align}\label{diagram:construction_of_A_tilde}
\xymatrix{
\Big( A\to\tilde{A}\Big) \ar[r] \ar[d] & \Big(X \to (UQ)^{\bullet+1}X\Big) \ar[d]\\
\Big( B \to (UQ)^{\bullet+1}B \Big) \ar[r] & \Big(Y \to (UQ)^{\bullet+1}Y\Big)
}
\end{align}
such that on each fixed cosimplicial degree, one has a homotopy pullback diagram. For instance, in cosimplicial degrees 0, 1 we have homotopy pullback diagrams of the form
\begin{align}
\xymatrix{
	\tilde{A}^0 \ar[r] \ar[d] & (UQ)X \ar[d]\\
	(UQ)B \ar[r] & (UQ)Y	
} 
\quad
\xymatrix{
	\tilde{A}^1 \ar[r] \ar[d] & (UQ)^2X \ar[d]\\
	(UQ)^2B \ar[r] & (UQ)^2Y	
}
\end{align}
in $\AlgO$, and these are coaugmented by the diagram in Theorem \ref{MainTheorem2}. For the same reasons as in the proof of Theorem \ref{MainTheorem1}, we may assume $A \to \tilde{A}$ is objectwise cofibrant, and that $\tilde{A}$ is a Reedy fibrant cosimplicial $\capO$-algebra.

\begin{proof}[Proof of Theorem \ref{MainTheorem2}]
Construct a diagram identical to \eqref{eq:resolution_of_F_diagram}. For the same reasons as in Theorem \ref{MainTheorem1}, the maps $(**)$ induces pro-$\pi_\ast$ isomorphisms after applying $\mathsf{Tot}_s$. The result now follows from Proposition \ref{prop:uniform_cartesian_esimates_for_A} below and arguing as in the proof of Theorem \ref{MainTheorem1}.
\end{proof}

\begin{prop}[cf. Proposition \ref{prop:uniform_cartesian_estimates_for_F}]\label{prop:uniform_cartesian_esimates_for_A}
	Let $n \geq - 1$. The coface $(n+1)$-cube associated to $A \to \tilde{A}$ is $\id$-cartesian.
\end{prop}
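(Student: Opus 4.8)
The plan is to mimic the proof of Proposition \ref{prop:uniform_cartesian_estimates_for_F}, replacing the homotopy fiber of $E \to B$ by the homotopy pullback of $B \to Y \leftarrow X$. The key observation is that a homotopy pullback square can be regarded as a particular total homotopy fiber: given a square as in \eqref{eq:pullback_diagram_generic}, its total homotopy fiber is trivial precisely when it is a homotopy pullback. So here the coface $(n+1)$-cube $\tilde{\mathcal{A}}_{n+1}$ associated to $A \to \tilde{A}$ should be compared with the $(n+3)$-cube obtained by assembling the coface $(n+1)$-cubes $\mathcal{B}_{n+1}$, $\mathcal{X}_{n+1}$, $\mathcal{Y}_{n+1}$ of $B \to (UQ)^{\bullet+1}B$, $X \to (UQ)^{\bullet+1}X$, $Y \to (UQ)^{\bullet+1}Y$ into the square direction, with $\mathcal{A}_{n+1}$ occupying the initial corner. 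Since each of $B, X, Y$ is $0$-connected, Proposition \ref{Higher_TQ_Hurewicz_Theorem} (with $k = 0$) gives that $\mathcal{B}_{n+1}$, $\mathcal{X}_{n+1}$, $\mathcal{Y}_{n+1}$ are each $(n+2)$-cartesian.

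The first main step is to estimate the cartesianness of the $3$-cube $\mathcal{X}_{n+1} \to \mathcal{Y}_{n+1} \leftarrow \mathcal{B}_{n+1}$ (thought of as a square of $(n+1)$-cubes), equivalently the $(n+3)$-cube assembled from them. I would invoke \cite[3.8]{Ching_Harper} (or its appropriate iterate): a square of maps in which each of the three ``non-initial'' faces is $j$-cartesian is itself $(j-1)$-cartesian, so the $(n+3)$-cube here is $(n+1)$-cartesian. The second step is to observe that its total homotopy fiber — which is the iterated homotopy fiber in the remaining $(n+1)$ directions of the total homotopy fiber of the square — is computed, on one hand, by the square $\mathcal{B}_{n+1} \to \mathcal{Y}_{n+1} \leftarrow \mathcal{X}_{n+1}$ having trivial total fiber levelwise (since each cosimplicial level is a homotopy pullback), and on the other hand is weakly equivalent to the iterated homotopy fiber of $\tilde{\mathcal{A}}_{n+1}$ shifted by the connectivity count. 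Concretely: the $(n+1)$-cartesianness of the assembled $(n+3)$-cube says its iterated homotopy fiber is $(n+1)$-connected; peeling off the two square directions (which contribute trivially, as those slices are homotopy pullbacks objectwise) leaves that the iterated homotopy fiber of $\tilde{\mathcal{A}}_{n+1}$ is $(n+1)$-connected, i.e., $\tilde{\mathcal{A}}_{n+1}$ is $(n+1)$-cartesian, hence $\id$-cartesian. Then one repeats on all subcubes, exactly as in Proposition \ref{prop:uniform_cartesian_estimates_for_F}.

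The main obstacle I anticipate is bookkeeping the interchange of iterated homotopy fibers: one must carefully justify that the total homotopy fiber of the assembled $(n+3)$-cube agrees (up to weak equivalence) with the iterated homotopy fiber of $\tilde{\mathcal{A}}_{n+1}$, using that $\tilde{A}^j$ was \emph{defined} as the levelwise homotopy pullback followed by cofibrant replacement, so that the ``square direction'' total fiber of the $j$th level is contractible. This is the analogue of the step ``this is weakly equivalent to the iterated homotopy fiber of $\tilde{\mathcal{F}}_{n+1}$'' in the proof of Proposition \ref{prop:uniform_cartesian_estimates_for_F}, but now with an extra cube direction; the relevant commutation of iterated homotopy fibers is standard cubical homotopy theory (see Section \ref{sec:appendix} and \cite[2.6]{Ching_Harper_derived_Koszul_duality}), applied one dimension higher. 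Once the estimate on $\tilde{\mathcal{A}}_{n+1}$ is in hand, the remainder of Theorem \ref{MainTheorem2} goes through verbatim as indicated in its proof above, since Proposition \ref{prop:UQ_preserves_id_cartesian} and the vertical-direction analysis are insensitive to whether the initial corner came from a fiber or a pullback.
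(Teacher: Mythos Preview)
Your setup is essentially the paper's: assemble the coface $(n+1)$-cubes $\tilde{\mathcal{A}}_{n+1},\mathcal{B}_{n+1},\mathcal{X}_{n+1},\mathcal{Y}_{n+1}$ into an $(n+3)$-cube along the square directions, observe that the levelwise pullback structure forces this $(n+3)$-cube to be $\infty$-cartesian, and combine with the Hurewicz estimates on $\mathcal{B}_{n+1},\mathcal{X}_{n+1},\mathcal{Y}_{n+1}$. The paper isolates the first observation as a separate Lemma (proved by induction on subcube dimension), then for each $k$-dimensional subcube $\mathcal{C}$ of $\tilde{\mathcal{A}}_{n+1}$ argues in two clean steps via \cite[3.8]{Ching_Harper}: since the $(k+2)$-cube is $\infty$-cartesian and $\mathcal{C}_X\to\mathcal{C}_Y$ is $k$-cartesian, the face $\mathcal{C}\to\mathcal{C}_B$ is $k$-cartesian; since $\mathcal{C}_B$ is $(k+1)$-cartesian, $\mathcal{C}$ is $k$-cartesian.

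Where your write-up goes wrong is the ``peeling off'' step. You assert two different things about the assembled $(n+3)$-cube: that it is $(n+1)$-cartesian (from \cite[3.8]{Ching_Harper} applied to the three non-initial corners), and that its total homotopy fiber is contractible (from the objectwise pullbacks). The second is correct and is exactly the content of the paper's lemma; the first, as stated, is not a valid invocation of \cite[3.8]{Ching_Harper}, which requires input from \emph{both} faces of a map of cubes---you cannot extract cartesianness of the big cube from three of the four corner cubes alone without already knowing something about $\tilde{\mathcal{A}}_{n+1}$. More seriously, the inference ``total fiber of the $(n+3)$-cube is highly connected, square directions contribute trivially, therefore the total fiber of $\tilde{\mathcal{A}}_{n+1}$ is highly connected'' is backwards: the triviality of the square directions means the total fiber of the $(n+3)$-cube is contractible, full stop---it does not identify that total fiber with the total fiber of $\tilde{\mathcal{A}}_{n+1}$. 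What the $\infty$-cartesianness actually buys you is that the total fiber of $\tilde{\mathcal{A}}_{n+1}$ is the homotopy \emph{pullback} of the total fibers of $\mathcal{B}_{n+1},\mathcal{X}_{n+1},\mathcal{Y}_{n+1}$; you then need a further connectivity argument (the paper's two applications of \cite[3.8]{Ching_Harper}) to finish. So the ingredients you list are the right ones, but the deduction needs to be rewritten along the paper's lines.
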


\begin{rem}\label{rem:low_dim_example_for_cartesianness_of_A}
	As in the case of Proposition \ref{prop:uniform_cartesian_estimates_for_F}, it may be helpful to first understand a low-dimensional example of Proposition \ref{prop:uniform_cartesian_esimates_for_A} before attacking the proof in full generality. Suppose we wish to show the 1-cube $A \to \tilde{A}^0$ is $\id$-cartesian. Consider the corresponding 1-cubes of $A, X, Y$ to obtain a commutative diagram of the form
	\begin{align}\label{diagram:low_dim_example_for_cartesianness_of_A}
	\xymatrix@!0{
	A \ar[dd] \ar[dr] \ar[rr] && X \ar[dd]|\hole \ar[dr]\\
	& \tilde{A}^0 \ar[dd] \ar[rr] &&UQX \ar[dd]\\
	B \ar[rr]|\hole \ar[dr]&& Y \ar[dr]\\
	& UQB \ar[rr] && UQY
	}
	\end{align}
in $\AlgO$. We will make frequent use of \cite[3.8]{Ching_Harper} in the following analysis. 

Since the back and front faces of \eqref{diagram:low_dim_example_for_cartesianness_of_A} are both homotopy pullback diagrams (i.e., infinitely caretesian), the entire 3-cube is also infinitely cartesian. The 1-cubes $X \to UQX$ and $Y \to UQY$ are both 2-cartesian (i.e., the maps are 2-connected) by Proposition \ref{Higher_TQ_Hurewicz_Theorem} and so the right-hand face of \eqref{diagram:low_dim_example_for_cartesianness_of_A} is 1-cartesian. Therefore, the left-hand face is 1-cartesian as well. Since $B \to UQB$ is 2-cartesian (also by Proposition \ref{Higher_TQ_Hurewicz_Theorem}), we conclude that $A \to \tilde{A}^0$ is 1-cartesian. One then repeats this argument on all subcubes of $A \to \tilde{A}^0$, i.e., on the objects $A$ and $\tilde{A}^0$.
\end{rem}

\begin{proof}[Proof of Proposition \ref{prop:uniform_cartesian_esimates_for_A}]
Denote by $\tilde{\mathcal{A}}_{n+1}, \mathcal{B}_{n+1}, \mathcal{X}_{n+1}, \mathcal{Y}_{n+1}$ the coface $(n+1)$ cubes associated to the coaugmented cosimplicial diagrams in \eqref{diagram:construction_of_A_tilde}. Let $\mathcal{C}$ be any subcube of $\mathcal{\tilde{A}}_{n+1}$, say of dimension $k$. Let $\mathcal{C}_B, \mathcal{C}_X, \mathcal{C}_Y$ denote the corresponding subcubes and consider the commutative diagram of subcubes
\begin{align}\label{diagram:subcubes_for_proof_of_uniform_cart_of_A}
\xymatrix{
\mathcal{C} \ar[r] \ar[d] & \mathcal{C}_X \ar[d]\\
\mathcal{C}_B \ar[r] & \mathcal{C}_Y	
}
\end{align}
As in Remark \ref{rem:low_dim_example_for_cartesianness_of_A}, the first step is to establish that the cube \eqref{diagram:subcubes_for_proof_of_uniform_cart_of_A} is infinitely cartesian. This is accomplished by Lemma \ref{lem:infintiely_cart_for_uniform_cart_of_A_proof} below. Next, Proposition \ref{Higher_TQ_Hurewicz_Theorem} implies that $\mathcal{C}_X$ and $\mathcal{C}_Y$ are both $(k+1)$-caratesian, so the cube $\mathcal{C}_X \to \mathcal{C}_Y$ is $k$-cartesian. Therefore, the cube $\mathcal{C}\to \mathcal{C}_B$ is $k$-cartesian. Since $\mathcal{C}_B$ is $(k+1)$-cartesian (also by Proposition \ref{Higher_TQ_Hurewicz_Theorem}), we conclude that $\mathcal{C}$ is $k$-cartesian.
\end{proof}

\begin{lem}\label{lem:infintiely_cart_for_uniform_cart_of_A_proof}
	For any subcube $\mathcal{C}$ of $\tilde{\mathcal{A}}_{n+1}$, the cube constructed in \eqref{diagram:subcubes_for_proof_of_uniform_cart_of_A} is infinitely cartesian.
\end{lem}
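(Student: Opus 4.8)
The plan is to use the fact that the coaugmented cosimplicial object $A \to \tilde{A}$ was built in \eqref{diagram:construction_of_A_tilde} precisely so that, in each cosimplicial degree as well as at the coaugmentation, the relevant square of $\capO$-algebras is a homotopy pullback, and then to propagate this vertexwise property through the cubical structure.

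First I would record the bookkeeping. If $\mathcal{C}$ has dimension $k$, then \eqref{diagram:subcubes_for_proof_of_uniform_cart_of_A} is a $(k+2)$-cube, which we regard as a $k$-cube indexed by the vertices $V$ of $\mathcal{C}$ whose value at $V$ is the square with corners $\mathcal{C}_V$, $(\mathcal{C}_X)_V$, $(\mathcal{C}_B)_V$, $(\mathcal{C}_Y)_V$. Now every vertex of the coface cube $\tilde{\mathcal{A}}_{n+1}$ is either the coaugmentation vertex $A$ or a cosimplicial-degree vertex $\tilde{A}^m$, and the square at that vertex is, respectively, the pullback square \eqref{eq:pullback_diagram_generic} (a homotopy pullback, since $p$ is a fibration and $B,X,Y$ are cofibrant) or the levelwise homotopy pullback square $\tilde{A}^m \to (UQ)^{m+1}X$, $(UQ)^{m+1}B \to (UQ)^{m+1}Y$ used to define $\tilde{A}$ in \eqref{diagram:construction_of_A_tilde}. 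Hence the square at every vertex $V$ of $\mathcal{C}$ is a homotopy pullback, i.e., infinitely cartesian.

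It then remains to deduce that the full $(k+2)$-cube is infinitely cartesian. I would argue by induction on $k$. For $k = 0$, the cube \eqref{diagram:subcubes_for_proof_of_uniform_cart_of_A} is itself one of the vertex squares, hence a homotopy pullback. For $k \geq 1$, split off one coface direction to present $\mathcal{C}$ as a map of $(k-1)$-cubes $\mathcal{C}' \to \mathcal{C}''$; then \eqref{diagram:subcubes_for_proof_of_uniform_cart_of_A} is a map of $(k+1)$-cubes between the analogous squares built over $\mathcal{C}'$ and over $\mathcal{C}''$, each infinitely cartesian by the inductive hypothesis, and an $(m+1)$-cube with two opposite faces infinitely cartesian is itself infinitely cartesian --- this is the same use of \cite[3.8]{Ching_Harper} already invoked in Remark \ref{rem:low_dim_example_for_cartesianness_of_A}. (Equivalently, in one step: the iterated homotopy fiber \cite[2.6]{Ching_Harper_derived_Koszul_duality} of \eqref{diagram:subcubes_for_proof_of_uniform_cart_of_A} is the iterated homotopy fiber over $\mathcal{C}$ of the $k$-cube sending each $V$ to the total homotopy fiber of the square at $V$, and each of those total homotopy fibers is weakly contractible by the previous paragraph.)

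The main obstacle here is not conceptual but the indexing check in the second paragraph: one must verify that the subcube $\mathcal{C}$ of $\tilde{\mathcal{A}}_{n+1}$ really does have each of its vertices matched with a coaugmentation or cosimplicial-degree object, so that the square at that vertex lands in one of the two families of homotopy pullback squares, and that the objectwise cofibrancy and Reedy fibrancy arranged before the statement ensure ``homotopy pullback'' is detected correctly (so that the strict pullback \eqref{eq:pullback_diagram_generic} is a homotopy pullback and the levelwise homotopy pullbacks in \eqref{diagram:construction_of_A_tilde} are genuine). Once this is pinned down, the cubical propagation is formal.
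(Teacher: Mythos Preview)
Your proposal is correct and follows essentially the same approach as the paper: induct on $k$, with the base case $k=0$ being a single homotopy pullback square by construction, and the inductive step splitting $\mathcal{C}$ as a map of $(k-1)$-subcubes and invoking \cite[3.8]{Ching_Harper} on the resulting pair of infinitely cartesian opposite faces. Your additional bookkeeping paragraph and the alternative iterated-homotopy-fiber argument are extras not present in the paper, but the core argument is the same.
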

\begin{proof}
	The proof is by induction on $k$. If $k=0$, then $\mathcal{C}$ is a single object and the lemma follows by construction. If $k \geq 1$, we may write $\mathcal{C}$ as a map of $(k-1)$-dimensional subcubes $\mathcal{D} \to \mathcal{E}$ of $\tilde{\mathcal{A}}_{n+1}$. Consider the commutative diagram of subcubes
	\begin{align*}
	\xymatrix@!0{
		\mathcal{D} \ar[dd] \ar[dr] \ar[rr] && \mathcal{D}_X \ar[dd]|\hole \ar[dr]\\
		& \mathcal{E} \ar[dd] \ar[rr] &&\mathcal{E}_X \ar[dd]\\
		\mathcal{D}_B \ar[rr]|\hole \ar[dr]&& \mathcal{D}_Y \ar[dr]\\
		& \mathcal{E}_B \ar[rr] && \mathcal{E}_Y
	}
	\end{align*}	
and note that this diagram is precisely \eqref{diagram:subcubes_for_proof_of_uniform_cart_of_A}. By induction, the back and front faces (which are themselves both $(k+1)$-cubes) are both infinitely cartesian, so the whole cube is as well.
\end{proof}

\section{Appendix: cubical diagrams}\label{sec:appendix}
The purpose of this appendix is to briefly summarize the tools of cubical homotopy theory used in this paper, particularly in Section \ref{sec:horizontal_direction}. While these notions can be defined in other settings, we have phrased them in the context of $\capO$-algebras to keep this section appropriately focused. For the more interested reader, useful references for cubical diagrams of spaces include \cite[A.8]{Dundas_Goodwillie_McCarthy}, \cite{Goodwillie_calculus_2}, and \cite{Munson_Volic_book_project}. In the context of $\capO$-algebras, see \cite{Ching_Harper} and \cite{Ching_Harper_derived_Koszul_duality}.

\begin{defn}
	Let $\mathcal{X}$ be a $W$-cube of $\capO$-algebras indexed on the set $W= [n]$, where $[n]: =\sett{1, 2, \ldots, n}$. Let $\mathcal{P}_0([n])$ be the poset of nonempty subsets of $[n]$. We say that $\mathcal{X}$ is \emph{$k$-cartesian} if the natural map $\mathcal{X}_\emptyset \to \holim_{\mathcal{P}_0([n])}\mathcal{X}$ is $k$-connected.
\end{defn}

The connectivity of $\mathcal{X}_\emptyset \to \holim_{\mathcal{P}_0([n])}\mathcal{X}$ gives information about the cube $\mathcal{X}$ as a whole. One might also be interested in subcubes (see \cite[3.6]{Blomquist_Harper_integral_chains} or \cite[A.8.0.1]{Dundas_Goodwillie_McCarthy}) of $\mathcal{X}$. This motivates the following definition, which appears in \cite{Dundas_relative_K_theory} and \cite{Dundas_Goodwillie_McCarthy}.

\begin{defn}
	Given a function $f \colon \mathbb{N} \to \mathbb{Z}$, we say that a cube $\mathcal{X}$ of $\capO$-algebras is \emph{$f$-cartesian} if each $d$-dimensional subcube of $\mathcal{X}$ is $f(d)$-cartesian; here, $\mathbb{N}$ denotes the non-negative integers.
\end{defn}
\begin{rem}
	For instance, to say that a cube $\mathcal{X}$ is $\id$-cartesian means that each $d$-dimensional subcube of $\mathcal{X}$ is $d$-cartesian.
\end{rem}

\begin{defn}\label{general_coface_cubes}
	Let $Z^{-1} \overset{d^0}{\to} Z$ be a coaugmented cosimplicial $\capO$-algebra. The \emph{coface $(n+1)$-cube $\mathcal{X}_{n+1}$ associated to $Z^{-1} \to Z$} is the canonical $(n+1)$-cube constructed using the cosimplicial relations $d^jd^i = d^id^{j-1}$ for $i < j$.
\end{defn}
\begin{rem}
	For instance, $\mathcal{X}_2$ has the form on the left, and $\mathcal{X}_3$ the form on the right.
\end{rem}

\begin{align*}
\xymatrix{
	\\
	Z^{-1} \ar^-{d^0}[r] \ar^-{d^0}[d] & Z^0 \ar^-{d^1}[d]\\
	Z^0 \ar^-{d^0}[r] & Z^1
}\quad \quad \quad
\xymatrix{
Z^{-1} \ar@{.>}_-{d^0}[dd] \ar@{.>}^-{d^0}[dr] \ar@{.>}^-{d^0}[rr] && Z^0 \ar^(.7){d^1}[dd]|\hole \ar^-{d^0}[dr]\\
& Z^0 \ar^(.3){d^1}[dd] \ar^(.3){d^1}[rr] &&Z^1 \ar^-{d^2 }[dd]\\
Z^0 \ar^(.3){d^0}[rr]|\hole \ar_-{d^0}[dr]&& Z^1 \ar_-{d^0}[dr]\\
& Z^1\ar_-{d^1}[rr] && Z^2
}
\end{align*}

One of the reasons cubical diagrams are useful is that (as described above) they naturally arise from cosimplicial diagrams, combined with the following fact, which is proven in \cite[Section 6]{Carlsson}, \cite{Dugger_homotopy_colimits}, and \cite[6.7]{Sinha_cosimplicial_models}.

\begin{prop}\label{prop:cofinality}
	For $n \geq 0$, the composite
	\begin{align*}
	\mathcal{P}_0([n]) \to \Delta^{\leq n}
	\end{align*}
	is left cofinal (i.e., homotopy initial).
\end{prop}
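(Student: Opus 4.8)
The plan is to verify left cofinality (homotopy initiality) via the standard comma category criterion (see, e.g., \cite{Bousfield_Kan}): writing $u\colon\mathcal{P}_0([n])\to\Delta^{\leq n}$ for the functor in question, it suffices to show that for every object $[m]$ of $\Delta^{\leq n}$ the comma category $u\downarrow[m]$ has weakly contractible nerve. The first step is to describe $u\downarrow[m]$ concretely. Unwinding the coface cube construction, $u$ carries a nonempty subset $S$ to the totally ordered set $S$ itself (that is, to $[\,|S|-1\,]$) and carries an inclusion $S\subseteq S'$ to the evident order-preserving injection; hence an object of $u\downarrow[m]$ is a pair $(S,g)$ with $\emptyset\neq S\subseteq[n]$ and $g\colon S\to[m]$ order-preserving, and a morphism $(S,g)\to(S',g')$ is an inclusion $S\subseteq S'$ with $g'|_S=g$. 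In particular $u\downarrow[m]$ is the poset of order-preserving maps $g\colon S\to[m]$ defined on nonempty subsets $S\subseteq[n]$, ordered by extension.

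The second step is to show this poset has contractible nerve, which I would do by covering it with the principal up-sets $P_{v,j}:=\{(S,g):v\in S,\ g(v)=j\}$, indexed by $v\in[n]$ and $j\in[m]$. Each $P_{v,j}$ has least element the one-point partial function $v\mapsto j$, hence is contractible; moreover any finite intersection $\bigcap_i P_{v_i,j_i}$ is again a principal up-set (with least element $v_i\mapsto j_i$) when the listed constraints are order-compatible, and is empty otherwise, so every nonempty finite intersection is contractible. Since $S\neq\emptyset$, every $(S,g)$ lies in $P_{v,g(v)}$ for each $v\in S$, so the $P_{v,j}$ cover $u\downarrow[m]$; by the nerve lemma $u\downarrow[m]$ is therefore weakly equivalent to the nerve of this cover, namely the simplicial complex whose simplices are the graphs of order-preserving partial functions $[n]\rightharpoonup[m]$. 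It remains to check that, in the relevant range $m\leq n$, this complex is contractible: for $m=0$ it is the classical fact that $\mathcal{P}_0([n])$ has nerve the barycentric subdivision of $\Delta^n$, and for $m\geq1$ one argues combinatorially (for instance by successively deleting the vertices lying over the minimum of $[n]$, whose links are again complexes of this type in a strictly smaller range).

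I expect the genuine obstacle to be precisely this last point --- the contractibility of $u\downarrow[m]$ for $m\geq1$. It is a purely combinatorial statement about the shape categories $\mathcal{P}_0([n])$ and $\Delta^{\leq n}$, with no input from the category of $\capO$-algebras (or from spaces), and it is exactly what is established, in the topological setting and hence equally here, in \cite[Section~6]{Carlsson}, \cite{Dugger_homotopy_colimits}, and \cite[6.7]{Sinha_cosimplicial_models}. I would therefore complete the proof by appealing to those references --- or by filling in the combinatorial argument sketched above --- and then conclude, from the comma category criterion, that $u$ is left cofinal, which is the assertion of the proposition.
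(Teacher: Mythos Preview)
The paper gives no proof of this proposition at all: it simply cites \cite[Section~6]{Carlsson}, \cite{Dugger_homotopy_colimits}, and \cite[6.7]{Sinha_cosimplicial_models} and moves on. Your proposal therefore already does more than the paper, in that you correctly set up the comma category criterion and give an accurate description of $u\downarrow[m]$ as the poset of order-preserving partial functions $[n]\rightharpoonup[m]$ with nonempty domain, ordered by extension. Since you ultimately defer to exactly the same three references for the contractibility of these comma categories, your argument and the paper's coincide in substance.

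One remark on your intermediate step: the nerve-lemma reduction via the cover $\{P_{v,j}\}$ does not actually buy you anything. A simplex of the nerve of this cover is precisely a finite set $\{(v_i,j_i)\}$ for which the constraints $v_i\mapsto j_i$ are compatible and order-preserving --- in other words, the face poset of the nerve is canonically isomorphic to the poset $u\downarrow[m]$ you started with. The geometric realization of the nerve of the cover is therefore just (the barycentric subdivision of) the classifying space of $u\downarrow[m]$, and the nerve lemma returns you to the original question. You seem to sense this, since you flag the contractibility of that complex as ``the genuine obstacle'' and then cite the references; that is the right call, and it matches what the paper does.
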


The upshot is this: Given a coaugmented cosimplicial $\capO$-algebra $Z^{-1}\to Z$, the map $Z^{-1} \to \holim_{\Delta^{\leq n}}Z$ is $k$-connected if and only if the associated coface $(n+1)$-cube $\mathcal{X}_{n+1}$ is $k$-cartesian.

\bibliographystyle{plain}
\bibliography{FibrationSequences}

\end{document}